\renewcommand\section{\@startsection{section}{1}{\z@}%
                                   {-3.5ex \@plus -1ex \@minus -.2ex}%
                                   {2.3ex \@plus.2ex}%
                                   {\normalfont\large\bfseries}} 
\renewcommand\subsection{\@startsection{subsection}{2}{\z@}%
                                     {-3.25ex\@plus -1ex \@minus -.2ex}%
                                     {1.5ex \@plus .2ex}%
                                    {\normalfont\itshape}} 
\newcommand{\dfn}[2]{{\it #1}{\index{#2}}}
\def\newtheorems{\newtheorem{theorem}{Theorem}[section]
                 \newtheorem{maintheorem}[theorem]{Main Theorem}
                 
                 \newtheorem{corollary}[theorem]{Corollary}
                 
                 \newtheorem{proposition}[theorem]{Proposition}
                 \newtheorem{lemma}[theorem]{Lemma}

                 \newtheorem{remark}[theorem]{Remark}

								 \newtheorem{problem}[theorem]{Problem}
								
}
\newtheorem*{examples*}{Examples}
\newtheorem*{example*}{Example}
\newtheorem*{remark*}{Remark}
\newtheorem*{remarks*}{Remarks}
\def\rfs#1{\mbox{\rm #1}}
\newcommand{\natN}{{\mathbb N}}
\newcommand{\natQ}{{\mathbb Q}}
\newcommand{\natR}{{\mathbb R}}
\newcommand{\natZ}{{\mathbb Z}}
\newcommand{\cmP}{{\mathscr P}}
\newcommand{\cmU}{{\mathscr U}}
\newcommand{\cmV}{{\mathscr V}}
\def\powerset{{\mathscr{P}}}
\newcommand{\eqdf}{\hbox{\bf \,:=\,}}
\newcommand{\onetpl}[1]{\langle#1\kern1.4pt\rangle}
\newcommand{\pair}[2]{\langle#1 ,#2\kern1.4pt\rangle}
\newcommand{\triple}[3]{\langle#1 ,#2\kern1.4pt ,#3\kern1.4pt\rangle}
\newcommand{\quaple}[4]{\langle#1 ,#2\kern1.4pt ,#3\kern1.4pt,#4\kern1.4pt\rangle}
\newcommand{\setof}[2]{\{#1\::\:#2\}}
\newcommand{\SETOF}[2]{\bigl\{#1\colon #2\bigr\}}
\newcommand{\seqn}[2]{\langle#1\,:\,#2\rangle}
\newcommand{\abs}[1]{| #1 |}
\newcommand{\fnn}[3]{#1:#2 \to #3}
\begin{document}
\title[Countable Successor Ordinals as GO-spaces]{Countable Successor Ordinals as Generalized Ordered Topological Spaces}

\author[Robert Bonnet and Arkady Leiderman]{Robert Bonnet$\,^1$ and Arkady Leiderman}
\address{R. Bonnet:
Laboratoire de Math\'ematiques,
Universit\'e de Savoie, Le Bourget-du-Lac, France}
\email{bonnet@univ-savoie.fr {\rm\ and }
Robert.Bonnet@math.cnrs.fr}
\address{A.~Leiderman:
Department of Mathematics, 
Ben-Gurion University of the Negev, Beer Sheva, Israel}
\email{arkady@math.bgu.ac.il}
\thanks{$^1\,$The first listed author gratefully acknowledges the financial support he received from the Center for Advanced Studies in Mathematics
 of the Ben-Gurion University of the Negev}

\keywords{Interval spaces, linearly ordered topological spaces, generalized ordered spaces, compact spaces}
\subjclass{03E10, 06A05, 54F05, 54F65}
\date{December 26, 2016}

\begin{abstract}
We prove the following  Main Theorem: 
Assume that any continuous image of a Hausdorff topological space $X$ is a generalized ordered space. 
Then $X$ is homeomorphic to a countable successor ordinal 
(with the order topology). 
\newline
The converse trivially holds.
\end{abstract}

\maketitle

\section{Introduction and Main Theorem}\label{introduction}
All topological spaces are assumed to be Hausdorff.
Remind that $L$ is a \dfn{Linearly Ordered Topological Space}%
{linearly ordered \\ topological space (LOTS)} (\dfn{LOTS}{LOTS})
whenever there is a linear ordering 
$\leq^L$ on the set $L$ such that a basis of the topology $\lambda^L$ on $L$ 
consists of all open convex subsets.
A \dfn{convex}{convex subset} set $C$ in a linear ordering $M$ is a subset of $M$ with the property: for every $x<z<y$ in $M$,  
if $x,y \in C$ then $z \in C$.
The above topology, denoted by \dfn{$\lambda^L$}{$\lambda^L$ (order topology)} 
is called an \dfn{order topology}{(order topology)}. 
Since the order $\leq^L$ defines the topology $\lambda^L$ on $L$ (but not vice-versa), we denote also by 
$\pair{L}{\leq^L}$ the structure including the topology $\lambda^L$. 

A topological space $\pair{X}{\tau^X}$ is called a 
\dfn{Generalized Ordered Space}{Generalized Ordered Space (GO-space)} 
(\dfn{GO-space}{GO-space})
whenever $\pair{X}{\tau^X}$ is homeomorphic to a subspace of a LOTS $\pair{L}{\lambda^L}$, 
that is $\tau^X = \lambda^L {\restriction} X \eqdf \setof{ U \cap X }{U \in \lambda^L}$ 
(see \cite{BL}). 

Evidently, every LOTS, and thus any GO-space, is a Hausdorff topological space, but not necessarily separable or Lindel\"of.
The Sorgenfrey line $Z$ is an example of a GO-space, which is not a LOTS, and such that every subspace of $Z$ is separable and Lindel\"of
(see \cite{E}).

By definition, every subspace of a GO-space is also a GO-space. 
In this article, in a less traditional manner, we say that a space $X$ is a 
\dfn{hereditarily GO-space}{hereditarily GO-space} if every continuous image of $X$ (in particular, $X$ itself) is a GO-space.

\begin{maintheorem} 
\label{main-thm}
Every hereditarily GO-space is homeomorphic to a countable successor ordinal, considered as a LOTS. 
The converse obviously holds. 
\end{maintheorem}

This result is closely related to the following line of research: characterize Hausdorff topological spaces $X$ such that
 all continuous images of $X$ have the topological  property $\cmP$. All questions listed below for concrete $\cmP$ are still open.
\newpage
\begin{problem}
\label{question-1.2}
\noindent
\begin{itemize}
\item[{\rm(1)}]
Characterize Hausdorff spaces such that all continuous images of $X$ are regular. 
\item[{\rm(2)}]
Characterize Hausdorff spaces $X$ such that all continuous images of $X$ are normal.
(This was partially solved by W. Fleissner and R. Levy in \cite{FL1, FL2}).
\item[{\rm(3)}]
Characterize Hausdorff spaces such that all continuous images of $X$ are realcompact.
(This question has been formulated in \cite{Okunev}).
\item[{\rm(4)}]
Characterize Hausdorff spaces such that all continuous images of $X$ are paracompact.
\item[{\rm(5)}]
Characterize Hausdorff spaces such that all continuous images of $X$ are monotonically normal. 
(This is related to ``Niekel Conjecture'' answered positively by M. E. Rudin \cite{Ru}).
\hfill$\square$
\end{itemize}
\end{problem}

\begin{remark}\label{R1}
{\rm
Let $\mathfrak{b}$ be the minimal cardinality of unbounded subsets of $\omega^\omega$.
Recently M. Bekkali and S. Todor\v cevi\'c proved the following relevant result:
Continuous zero-dimensional images of a compact LOTS of weight less than $\mathfrak{b}$ is itself a LOTS \cite[Theorem 4.2]{BT}.

Note also a recent paper \cite{Tkachuk}, which studies topological properties $\cmP$ that are reflectable in small continuous images.
 For instance, a GO-space $X$ is hereditarily Lindel\"of iff all
continuous images of $X$ have countable pseudocharacter \cite{Tkachuk}.
}
\end{remark}

As a special case of Main Theorem~\ref{main-thm}, we obtain the following fact. 
\begin{corollary}
\label{cor-456789123nn}
Assume that any continuous image of a Hausdorff topological space $X$ is a LOTS.
Then $X$ is homeomorphic to a countable successor ordinal. 
\end{corollary}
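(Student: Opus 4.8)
The plan is to deduce Corollary~\ref{cor-456789123nn} directly from Main Theorem~\ref{main-thm}; there is essentially nothing to prove beyond one trivial observation. That observation is that every LOTS is a GO-space: a LOTS $\pair{L}{\lambda^L}$ is homeomorphic to a subspace of a LOTS, namely to itself, with $\tau^L = \lambda^L {\restriction} L$, so it satisfies the definition of a GO-space. Hence, if every continuous image of a Hausdorff space $X$ is a LOTS, then \emph{a fortiori} every continuous image of $X$ is a GO-space, which is precisely the assertion that $X$ is a hereditarily GO-space in the sense introduced before Main Theorem~\ref{main-thm}. Applying that theorem, $X$ is homeomorphic to a countable successor ordinal, and we are done.

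Since the argument is a one-line reduction, I do not expect any obstacle: the entire mathematical content resides in Main Theorem~\ref{main-thm}, whose proof occupies the remainder of the paper. The only point worth double-checking is the harmless claim that a LOTS is a subspace of a LOTS (itself), which is immediate from the definitions.

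It may be worth remarking, although it is not required for the statement, that the implication in Corollary~\ref{cor-456789123nn} can in fact be reversed, yielding a genuine characterization. Indeed, a countable successor ordinal is a countable compact Hausdorff space, hence so is each of its continuous images; by the classical Mazurkiewicz--Sierpi\'nski classification of countable compact spaces, every such image is again homeomorphic to a countable successor ordinal, in particular a LOTS. This refinement goes slightly beyond the converse half of Main Theorem~\ref{main-thm} (which only produces GO-spaces), but it costs nothing extra.
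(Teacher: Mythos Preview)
Your reduction is correct: since every LOTS is trivially a GO-space, the hypothesis of Corollary~\ref{cor-456789123nn} implies that of Main Theorem~\ref{main-thm}, and the conclusion follows. The paper itself introduces the corollary as ``a special case of Main Theorem~\ref{main-thm},'' so this derivation is explicitly sanctioned.

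That said, the proof the paper actually writes out for Corollary~\ref{cor-456789123nn} (in \S\ref{section-elementary}) is deliberately independent of the Main Theorem. There the argument is self-contained and uses only LOTS: one checks that $X$ satisfies c.c.c.\ (Fact~4), that a countable closed discrete subset of a $0$-dimensional LOTS is a continuous image of it (Fact~5), and that the ultrafilter space $\natN^*$ is not a LOTS (Fact~6). One then introduces the equivalence $\equiv$ collapsing maximal order-scattered intervals; a case analysis on the cuts of the quotient $X_1$ (using the circle-quotient trick in the connected case) forces $|X_1|=1$, so $X$ is order-scattered, hence topologically scattered, c.c.c., without infinite closed discrete subsets, and therefore compact and countable. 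The trade-off is plain: your route is a one-line deduction but presupposes the full GO-space machinery of \S\ref{proof main theorem}; the paper's route is longer but elementary, relying only on standard LOTS facts, and was included precisely so that a reader interested only in the LOTS case need not absorb the more delicate GO-space arguments.
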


In order to make this paper widely readable, we have tried to give self-contained and elementary proofs,
even when our results could be deduced from more general theorems. 
By these reasons, and for the readers' convenience, we include a separate and short proof
 of Corollary~\ref{cor-456789123nn} in Section~\ref{section-elementary}. 

In our paper, a \dfn{GO-structure}{GO-structure}, formally, 
is a 4-tuple  
\hbox{$ \quaple{X}{\tau^X}{L}{\leq^L} $} 
where 
\begin{itemize}
\item[{\rm(1)}]
$X \subseteq L$,
\item[{\rm(2)}]
$\pair{L}{\leq^L}$ is a linear ordering 
with the order topology $\lambda^L$, 
and %
\item[{\rm(3)}] 
the order $\leq^X$ on $X$ is the restriction $\leq^L \restriction X$ of $\leq^L$ to $X$,
\ \  and \ \  
\\
$\tau^X$ is the topology $\lambda^L {\restriction} X 
\eqdf \setof{ U \cap X }{U \in \lambda}$
\end{itemize}
%
Any GO-space 
$X$ can be written under the above structure.
In~\cite{HNV}, \cite{L}  GO-space are denoted by 
$\triple{X}{\tau^X}{\leq^X}$ 
where $\leq^X$ is the restriction of $\leq^L$ to $X$. 
It is easy to see that the following are equivalent.
\begin{itemize}
\item[{\rm(i)}]
$\triple{X}{\tau^X}{\leq^X}$ is a GO-space and 
\item[{\rm(ii)}]
$ \lambda^X \subseteq \tau^X$ and $\tau^X$ has a base consisting of convex sets.
\end{itemize}

Many times we denote $\quaple{X}{\tau^X}{L}{\leq^L}$ 
by $\quaple{X}{\tau^X}{L}{\lambda^L}$.

The proof of Main Theorem~\ref{main-thm}, is organized as follows.
In \S\ref{section-basic}, we present the basic facts on LOTS and GO-spaces. 
In particular, Proposition~\ref{prop-fqljdlfk} shows that if $\quaple{X}{\tau^X}{L}{\leq^L}$ 
is a GO-structure, then we may assume that $L$ is a complete ordering and that $X$ is topologically dense in $\pair{L}{\lambda^L}$. 
In \S\ref{subsection preliminary} we show that any hereditarly GO-space satisfies c.c.c. property: every family of pairwise disjoint nonempty open set is countable (Lemma \ref{lemma-2.1}). 
In \S\ref{subsection counterexample} we prove that any hereditarily GO-space has no countable closed and relatively discrete subset (Corollary~\ref{cor-hfdsk}): 
we  recall that a subset $D$ of a space $Y$ is \dfn{relatively discrete}{relatively discrete (subset)} whenever there is a family $\cmU_D \eqdf \setof{ U_d }{ d \in D }$ of open subsets of $Y$ such that $D \cap U_d = \{d\}$ for every $d$.
Proposition~\ref{prop-kfjdsluzreo} shows that a hereditarily GO-space is a subspace of a scattered linear order (with the order topology).
Finally, in \S\ref{subsection proof of thm} we conclude the proof of Main Theorem.


\section{Elementary proof of Main Theorem for LOTS: Corollary~\ref{cor-456789123nn}}
\label{section-elementary}


\newcommand{\seqnn}[2]{{\langle#1\rangle}_{#2}}

We assume that the reader is familiar with the properties of LOTS and give a self-contained proof of Corollary~\ref{cor-456789123nn}. 
To prove the result, we need some preliminaries.

\noindent
{\bf Fact 1.}
\begin{it}%
Let $\pair{L}{\leq^L}$ be a LOTS. 
If $\pair{L}{\leq^L}$ is a scattered ordering then $\pair{L}{\lambda^L}$ is a scattered topological space.
\hfill$\square$
\end{it}
\smallskip

The converse does not hold: consider the lexicographic sum 
$N \eqdf \sum_{r \in \natQ} \natZ_r$  of copies $\natZ_r$ of the integers $\natZ$ over the rational chain $\natQ$: \,$\pair{N}{\leq^N}$ is not a scattered ordering but $\pair{N}{\lambda^N}$ is a discrete space (this example will be used again in  Part~(1) of Remark~\ref{R2}). 

The next fact is used implicitly in the arguments that follow.
\smallskip

\noindent
{\bf Fact 2.}
\begin{it}%
Let $\pair{L}{\leq^L}$ be a LOTS and $F$ be a closed subspace of 
$\pair{L}{\lambda^L}$. 
Then the induced topology $\lambda^L {\restriction} F$ on $F$ is the order topology on $F$ defined by the restriction ${\leq^L} 
{\restriction} F$ of $\leq^L$ on $F$.
\hfill$\square$
\end{it}
\smallskip

\noindent
{\bf Fact 3.}
\begin{it}%
Let  $\pair{L}{\leq^L}$ be a scattered linear ordering then $\pair{L}{\lambda^L}$ is $0$-dimensional (i.e. $\pair{L}{\lambda^L}$ has a base consisting of
 clopen sets). 
\end{it}
\newline
{\it Proof.} The proof uses the fact that if $\pair{L}{\leq^L}$ is a scattered linear ordering, then the Dedekind completion $\pair{L^c}{\leq^{L^c}}$ of $\pair{L}{\leq^L}$ is also a scattered linear ordering.
\hfill$\square$
\smallskip

\noindent
{\bf Fact 4.}
\begin{it}%
Let any continuous image of a Hausdorff topological space $Y$ is a LOTS. 
Then $Y$ satisfies c.c.c.\,.
In particular, $\omega_1$ and $\omega_1^*$ are not order-embeddable in $Y$. 
\end{it}
\smallskip

\noindent
{\bf Fact 5.}
\begin{it}%
Let $Y$ be a $0$-dimensional LOTS. 
If $D$ is a countable closed and discrete subset of $Y$  
then $D$ is a continuous image of $Y$.
\end{it}
\newline
{\it Proof.} 
Let $\seqnn{ U_d }{ d \in D}$ be clopen subsets of $Y$ such that  
$U_d \cap D = \{ d \}$ for $d \in D$. 
Fix $d_0 \in D$. 
Let $\approx$ be the equivalence relation on $Y$ defined by 
\smallskip
$x \approx y$ whenever there is $d \in D \setminus \{ d_0 \}$ such that $x,y \in U_d$, or
$x, y \not\in U \eqdf \bigcup \, \setof{\, U_d }{ d \in D {\setminus} \{ d_0 \} \, }$. 
Then $Y / { \approx }$ is a continuous image of $Y$, 
\ $Y / { \approx }$ is Hausdorff and $D$ is homeomorphic to  $Y / { \approx }$. 
\hfill$\square$
\smallskip

\noindent
{\bf Fact 6.}
\begin{it}%
Let $D$ be a countable and discrete space. 
Then $D$ has a continuous image which is not homeomorphic to a LOTS.
\end{it}
\newline
{\it Proof.}
Consider $\omega$ as a discrete space. 
Let $\cmU$ on $\omega$ be a non-principal ultrafilter on $\omega$ and let $*$ be a new element with $* \not\in \omega$.
We equip $\natN^* := \omega \cup \{*\}$ with the topology induced from the Stone--\v Cech compactification $\beta \natN$.
It is well-know that  $\natN^*$ is not a LOTS: 
this is so because $\natN^*$ is separable and its topology does not have a countable base \cite[\S3.6]{E}. 
Evidently, countable $\natN^*$ is a continuous image of $\omega$. 
\hfill$\square$
\smallskip 

As a consequence of Facts~4--6, we have the following result. 
\smallskip

\noindent
{\bf Fact 7.}
\begin{it}%
Let $\triple{X}{\leq^X}{\lambda^X}$ be a scattered linear ordering. 
Assume that any continuous image of $X$ is a LOTS. 
Then the following holds
\begin{itemize}
\item[{\rm(1)}]
If $x$ is in the topological closure of a nonempty subset $A$ in $X$, then  there is a countable monotone sequence of elements of $A$ converging to $x$.
\item[{\rm(2)}]
Every monotone sequence $\seqnn{ x_n }{ n \in \omega }$ converges. 
In particular, there exist both minimum and maximum in $\pair{X}{\leq}$. 
\hfill$\square$
%
\end{itemize}
\end{it}

Now we are in a position to finish the argument.

\begin{proof}[Proof of Main Theorem (for LOTS)]
Let $\equiv$ be the equivalence on $\triple{X}{\leq^X}{\lambda^X}$ defined by: 
 $$
x \equiv y 
\text{ \ if and only if  \ } %
\left\{ \begin{array}{ll}
x \leq y  
&
\text{and \  \ } [x,y] \text{ is a scattered linear ordering} 
\\
x \leq y 
&
\text{and \  \ } [y,x] \text{ is a scattered linear ordering} 
\end{array}
\right. 
$$
Note that each $\equiv$-class is closed for the topology $\lambda^X$ and each $\equiv$-class is convex and scattered for the order $\leq^X$. 
Moreover, $X/{\equiv}$, denoted by $X_1$, is a LOTS.

\smallskip 

Note that there are no consecutive classes in $X_1$, and thus $X_1$ is order-dense.
Also the map $\fnn{ \varphi }{ X }{ X/{\equiv} }$, preserving supremum and infimum, is increasing and continuous.

Let $N$ be a linear ordering and $N^c$ its Dedekind completion. 
Recall that a {\it cut} is a member of $N^c \setminus  \bigl(L \cup \{ \min(N^c) , \max(N^c) \}\bigr)$. 
For instance, in the chain $\natQ$ of rationals, the cuts are the irrationals. 

\noindent
{\bf Case 1.}
\begin{it}%
The set $\Gamma \ ( \eqdf X_1{}^c \setminus X_1 \, )$ of cuts of $X_1$ has no consecutive elements and $\Gamma$ is topologically dense in $X_1$. 
\end{it}%

So, $\Gamma$ is order--dense and $\Gamma$ has no first and no last element. Note that, in that case, $X_1$ is $0$-dimensional.
Let $c \in \Gamma$. 
Let $\seqnn{ c_\alpha }{ \alpha < \lambda }$ be a strictly increasing sequence cofinal in $(-\infty,c)$ with $\lambda$ regular. 
By Fact~4, $\lambda = \omega$. 
Since $c \in \Gamma$, 
$D \eqdf \setof{ x_\alpha }{ \alpha \in \omega }$ is a countable discrete and closed subset of $X_1$, which contradicts Fact~6. 
So, Case~1 does not occur. 

\noindent
{\bf Case 2.}
\begin{it}%
The set $\Gamma \ ( \eqdf X_1{}^c \setminus X_1 \, )$ of cuts of $X_1$ has two consecutive elements, or $\Gamma$ is not topologically dense in $X_1$. 
\end{it}%

Then there is a nonempty open interval $(u,v)$ of $X$ such that $(u,v) \cap \Gamma = \emptyset$. 
We set $X'_1 \eqdf [u,v]$. 
So $X'_1$ is connected, infinite and order--dense. 
Also $X'_1$ is a continuous image of $X_1$. 
Let $X_2$ be the quotient space of $X'_1$, obtained by identification of $u$ and $v$. 
Obviously, $X_2$ is connected. 
Since for every $t \in X_2$ the set $X_2 \setminus \{ t \}$ is connected, $X_2$ is not a LOTS. 

We have proved that $\abs{ X_1 } = 1$, that is: $X$ is a scattered linear ordering. 
By Fact~1, $X$ is a scattered topological space. 
Moreover, $X$ satisfies c.c.c., and thus $\omega_1$ and $\omega_1^*$ are not order-embeddable in the scattered linear ordering $X$. In particular, the space $X$ has only countably many isolated points. 
Also the space $X$ has no infinite and discrete subset. 
Hence the linear ordering $X$ is complete and thus the space $X$ is compact. 
We have proved that $X$ is a countable compact and scattered space, that is, $X$ is homeomorphic to $\alpha+1$ for some $\alpha<\omega_1$.

\end{proof}

\section{Basic facts on LOTS and GO-spaces}
\label{section-basic}

Let $S$ be a set, $\cmU \subseteq \powerset(S)$ and $T \subseteq S$. 
We set 
$\cmU \restriction T = \setof{ U \cap T }{ U \in \cmU }$.
Recall that a linear ordering $\pair{M}{\leq^M}$ is \dfn{complete}{complete (linear ordering)} 
whenever every subset $A$ of $M$ has the supremum $\sup^M(A)$ and the infimum $\inf^M(A)$.
In particular, there exist both the maximum $\max(M)$ and the minimum $\min(M)$ in $M$. 

\smallskip

Let $N$ be a linear ordering. 
The \dfn{Dedekind completion of $N$}{Dedekind completion of $N$},  
denoted by \dfn{$N^c$}{$N^c$: Dedekind completion of $N$}, 
is a complete linear ordering containing $N$, 
such that $N^c$ is minimal with respect to this property. 
That is, 
\begin{itemize}
\item[{\rm(D1)}]
$N^c$ is a complete chain,
\item[{\rm(D2)}]
for every $x, y \in N$: if $x <^N y$ then $x <^{N^c} y$,
\item[{\rm(D3)}]
for every $x \in N^c$ there are sets $A, B \subseteq N$ 
such that $x \not\in A \cup B$ and 
$\sup^{N^c}(A) = x = \inf^{N^c}(B)$. 
\end{itemize}
For instance, $\min(N^c) = \sup^{N^c}(\emptyset) = \inf^{N^c}(N)$. 
Notice also that the Dedekind completion $N^c$ of $N$ is 
unique up to an order-isomorphism. 

In a linear ordering $N$, a \dfn{cut in $N$}{cut (in an ordering)} is a member 
of $N^c \setminus \bigl(N \cup \{ \min(N^c) , \max(N^c) \}\bigr)$, 
\cite[2.22, 2.23]{Ro}. 
For example,  the cuts for the chain $\natQ$ of rationals are irrationals. 
Next we say that 
\dfn{$\pair{a}{b}$ are consecutive in $N$}{consecutive elements \\ in a linear ordering} 
whenever $a,b \in N$, \,$a<b$ and $(a,b)^N = \emptyset$. 
So, for any linear ordering~$N$:  
\begin{itemize}
\item[{\rm(1)}]
if $\pair{a'}{b'}$ are consecutive in $N$ 
then $\pair{a'}{b'}$ are consecutive in $N^c$, and 
\item[{\rm(2)}]
if $\pair{a'}{b'}$ are consecutive in $N^c$ then $a',b' \in N$ 
and $\pair{a'}{b'}$ are consecutive in $N$.
\end{itemize}

The following fact is well-known.
\begin{proposition}
\label{prop-klllkd.}
Let $N$ be a LOTS. 
\begin{itemize}
\item[{\rm(1)}]
$\pair{N}{\lambda^N}$ is a compact space if and only if $\pair{N}{\leq^N}$ is order complete.
\item[{\rm(2)}]
$\pair{N}{\lambda^N}$ is a connected space if and only if 
$\pair{N}{\leq^N}$ has no cuts and no consecutive elements.
\hfill$\Box$
\end{itemize}
\end{proposition}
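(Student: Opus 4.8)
The plan is to prove each of the two biconditionals by splitting into its two implications and using only the order-topological facts already in place: property (D3) of the Dedekind completion $N^{c}$, and the dictionary between consecutive pairs in $N$ and in $N^{c}$.

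For Part~(1), the implication ``$N$ order complete $\Rightarrow$ $\pair{N}{\lambda^N}$ compact'' I would prove by the classical ``supremum of covered initial segments'' argument. Completeness supplies $m=\min N$ and $M=\max N$. Given a cover of $N$ by basic convex open sets, let $Z$ be the set of $x\in N$ such that $[m,x]$ has a finite subcover, and let $s=\sup Z$. I would show first $s\in Z$ and then $s=M$: in each step one extracts a convex open neighbourhood $O$ of $s$ from the cover and splices it onto a finite subcover of an initial segment lying just below $s$ (respectively, uses $O$ to push past $s$); since $s=M$ and $s\in Z$, $[m,M]=N$ then has a finite subcover. The converse ``$\pair{N}{\lambda^N}$ compact $\Rightarrow$ $N$ order complete'' I would prove by contraposition: if a nonempty $A\subseteq N$ had no supremum, then the rays $(-\infty,a)$ with $a\in A$ together with the rays $(b,\infty)$, $b$ ranging over the upper bounds of $A$, would form an open cover of $N$ with no finite subcover — any finite subcover would produce a largest $a^{*}\in A$ among the rays of the first kind and a least $b^{*}$ among those of the second kind, and $a^{*}$ (which lies strictly below $b^{*}$, lest it be a maximum of $A$) would be uncovered. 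The degenerate situations (``$A$ unbounded above'', ``$N$ without bottom or top'') are dealt with identically using a single family of rays.

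For Part~(2), the implication ``$\pair{N}{\lambda^N}$ connected $\Rightarrow$ $N$ has no cuts and no consecutive elements'' is easy: a consecutive pair $a<b$ makes $(-\infty,b)=(-\infty,a]$ a nontrivial clopen set, while a cut $c\in N^{c}\setminus N$ splits $N$ into the nonempty open pieces $\setof{x\in N}{x<^{N^{c}}c}$ and $\setof{x\in N}{x>^{N^{c}}c}$ (nonemptiness following from (D3) and the fact that $c$ is neither $\min N^{c}$ nor $\max N^{c}$). The substantive direction is the converse, which I would prove by contradiction. Suppose $N=U\sqcup V$ with $U,V$ nonempty and open; choose $u\in U$, $v\in V$ with $u<v$; put $A_{0}=\setof{x\in N}{u\le x \text{ and } [u,x]\subseteq U}$ and set $c=\sup A_{0}$ computed in $N^{c}$, so that $u\le c\le v$. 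I then exclude each possibility for $c$: if $c\in U$ then $[u,c]\subseteq U$, and since $c\neq v$ (hence $c$ is not the top of $N$) an open neighbourhood of $c$ contained in $U$ reaches strictly above $c$ and enlarges $A_{0}$ beyond its own supremum; if $c\in V$ then $c\neq u$ (hence $c$ is not the bottom of $N$) and an open neighbourhood of $c$ contained in $V$ reaches strictly below $c$ and meets $A_{0}$, contradicting $U\cap V=\emptyset$; and if $c\notin N$ then $u\le c\le v$ forces $c$ to be a cut of $N$, contrary to hypothesis. Hence no clopen partition of $N$ exists and $\pair{N}{\lambda^N}$ is connected.

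The step I expect to be the main obstacle is the same in both parts: extracting a convex open neighbourhood $O$ of the critical point ($s$ in Part~(1), $c$ in Part~(2)) that genuinely reaches strictly past that point on the relevant side. This can fail exactly when the point has an immediate successor or predecessor. In the hard direction of Part~(2) this is precisely where ``no consecutive elements'' is consumed: it guarantees that a half-line $(-\infty,t]$ or $[t,\infty)$ is never a neighbourhood of one of its interior points, so $O$ must straddle the point. In Part~(1) there is no such hypothesis, so the immediate-successor/predecessor case must be treated on its own — but it is harmless there, since then the neighbouring point is covered by a single further member of the cover and the finite-subcover bookkeeping still closes. Once this neighbourhood lemma is stated and checked carefully, the remaining verifications are routine.
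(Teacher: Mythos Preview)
Your proof is correct and follows the standard order-theoretic arguments for these classical equivalences; in particular, your handling of the immediate-successor/predecessor obstruction in both parts is accurate. The paper itself gives no proof of this proposition---it is stated as well-known and marked with a bare $\Box$---so there is nothing to compare against beyond noting that your argument is the expected one.
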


Next we introduce some basic notions on scattered linear orders 
and scattered spaces. 
Let $N$ be a linear ordering. 
We say that $N$ is \dfn{order-dense}{order-dense ordering}, 
or \dfn{dense}{dense} if 
between two elements of $N$ there is a member of $N$.
Notice that for any dense linear order $N$, the rational chain $\natQ$ 
is  order-embeddable in $N$.
A linear order $N$ is called \dfn{order-scattered}{scattered (ordering)}
or simply scattered, 
whenever the rational chain $\natQ$ is not embeddable in $N$. 
For example, $\omega_1$ and its converse ordering $\omega_1^*$
are scattered linear ordering  (by the definition,  \dfn{$\pair{\omega_1^*}{\leq}$}{$\omega_1^*$} is the ordering 
$\pair{\omega_1}{\geq}$).

A space $Y$ is \dfn{dense-in-itself}{dense-in-itself (space)} 
if $Y$ is nonempty and has no isolated point. 
A dense-in-itself and closed subspace $Y$  of a space $Z$ is called a \dfn{perfect}{perfect space} subspace of $Z$. 
A space $X$ is called \dfn{topologically-scattered}{scattered (topology)}, 
or simply \dfn{scattered (space)}{scattered (space)}, whenever $X$ does not contain a perfect subspace,
 that is, every nonempty subset $A$ of $X$ with the induced topology has an isolated point in $A$. 
We state other well-known facts about LOTS. For completeness we include the proofs.

\begin{proposition}
\label{prop-1.3}
Let $N$ be a LOTS. 

\begin{itemize}
\item[{\rm(1)}] 
The following hold.
\begin{itemize}
\item[{\rm(a)}]
If $\pair{N}{\leq^N}$ is a scattered linear ordering
then $\pair{N}{\lambda^N}$ is topologically scattered. 
\item[{\rm(b)}]
Assume that $\pair{N}{\leq^N}$ is a complete chain, 
i.e., by Proposition~{\rm\ref{prop-klllkd.}(1)}, $\pair{N}{\lambda^N}$ is a compact space. 
Then the following are equivalent.
\begin{itemize}
\item[{\rm(i)}]
$\pair{N}{\lambda^N}$ is a scattered topological space. 
\item[{\rm(ii)}]
$\pair{N}{\leq^N}$ is a scattered linear ordering.
\end{itemize}
%
\end{itemize}
\item[{\rm(2)}] 
If $\pair{N}{\leq^N}$ is order-scattered then $\pair{N}{\lambda^N}$ is $0$-dimensional.%
\smallskip

\item[{\rm(3)}] 
If $N$ has only countably many isolated 
points then $N$ is coun\-ta\-ble.
\end{itemize}
\end{proposition}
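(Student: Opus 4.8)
The plan is to reduce the whole proposition to one lemma: \emph{every nonempty dense‑in‑itself LOTS contains a suborder order‑isomorphic to $\natQ$}. To prove this lemma, note that a dense‑in‑itself LOTS $P$ is infinite (a finite linear order carries the discrete topology). Put $x\approx y$ iff $[\min\{x,y\},\max\{x,y\}]^P$ is finite; this is an equivalence relation whose classes are convex. No class has three elements: by convexity it would then contain three \emph{consecutive} points $d_1<d_2<d_3$, so that $(d_1,d_3)^P=\{d_2\}$ would be an isolating neighbourhood of $d_2$; similarly an infinite class would contain a point with both an immediate predecessor and an immediate successor. Hence every $\approx$‑class has one or two elements, so $P/{\approx}$ is infinite, and it is order‑dense: if $[x]<[y]$ were consecutive and $x^*=\max[x]$, $y_*=\min[y]$, then any point strictly between $x^*$ and $y_*$ cannot lie in $[x]$ or $[y]$ (convexity), hence would lie in a third class between them; so $(x^*,y_*)^P=\emptyset$, i.e.\ $x^*\approx y_*$ and $[x]=[y]$, a contradiction. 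An infinite order‑dense linear order contains a countable order‑dense suborder, and deleting its endpoints (if any) yields, by Cantor's theorem, a copy of $\natQ$; choosing one representative of $P$ in each of the corresponding $\approx$‑classes embeds $\natQ$ into $P$.

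Part (1)(a) is then immediate: if $\pair{N}{\leq^N}$ is order‑scattered but $\pair{N}{\lambda^N}$ has a perfect subspace $P$, then by Fact~2 the subspace topology of $P$ is the order topology of $\leq^N{\restriction}P$, so $P$ is a dense‑in‑itself LOTS and the lemma embeds $\natQ$ into $N$. In (1)(b), the implication (ii)$\Rightarrow$(i) is exactly (1)(a) (completeness is not used). For (i)$\Rightarrow$(ii) I argue contrapositively: if $\leq^N$ is not order‑scattered, define $xEy$ iff the closed interval between $x$ and $y$ is order‑scattered; the classes are convex, order‑scattered, and—using completeness of $N$—topologically closed and endowed with a maximum (resp.\ minimum) whenever bounded above (resp.\ below). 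As in the lemma $N/E$ has no consecutive elements; it has at least two elements because $N$ is not order‑scattered; and the quotient map $\varphi\colon N\to N/E$ is order‑preserving and, again by completeness, preserves suprema and infima, so $N/E$ is a complete order‑dense LOTS with $\ge2$ points, hence connected by Proposition~\ref{prop-klllkd.}(2). A connected space with more than one point has no isolated point, so $N/E$ is a perfect space and is not topologically scattered; since $\varphi$ is continuous, $N/E$ is Hausdorff, and $\pair{N}{\lambda^N}$ is compact by Proposition~\ref{prop-klllkd.}(1), the (standard) fact that continuous Hausdorff images of compact scattered spaces are scattered forces $\pair{N}{\lambda^N}$ to be non‑scattered as well. (Completeness is essential: $\sum_{q\in\natQ}\natZ$ is a discrete—hence topologically scattered—LOTS with non‑scattered order.)

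For (2): if $\leq^N$ is order‑scattered, then no interval $[x,y]^N$ with $x<^N y$ can be connected, since a connected LOTS with $\ge2$ points is order‑dense and infinite and would contain $\natQ$. Hence every such $[x,y]^N$ carries a proper clopen subinterval, which for a LOTS means that there is a consecutive pair $z_1<z_2$ in $N$ with $x\le z_1$, $z_2\le y$, or a cut of $N$ inside $(x,y)$; either way a clopen initial segment of $N$ separates $x$ from $y$. Intersecting such sets obtained on both sides of a point, inside a prescribed basic neighbourhood, produces a clopen convex neighbourhood inside it, so $\pair{N}{\lambda^N}$ is $0$‑dimensional. For (3) (the statement is meant for $N$ scattered—it fails for $[0,1]$): in a scattered space the isolated points are dense, so if they are countably many, $N$ is separable, hence ccc, hence hereditarily Lindel\"of; by a Lindel\"of argument—covering each level $N^{(\alpha)}\setminus N^{(\alpha+1)}$ by small neighbourhoods that meet no later level—the Cantor--Bendixson rank of $N$ is a countable ordinal, while each $N^{(\alpha)}$ is again ccc (a disjoint family of open intervals in a closed subspace of a LOTS stays disjoint in $N$), hence has only countably many isolated points; thus $N=\bigcup_{\alpha}\bigl(N^{(\alpha)}\setminus N^{(\alpha+1)}\bigr)$ is a countable union of countable sets.

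The main obstacle is the direction (1)(b)(i)$\Rightarrow$(ii): manufacturing a topological perfect subspace out of an order‑copy of $\natQ$. The topological closure of such a copy need not be dense‑in‑itself (it can acquire isolated points where $N$ has gaps on both sides of a point of the copy), so one is pushed either to pass to the quotient $N/E$ and invoke the preservation of scatteredness under continuous images of compacta, as above, or to run a Cantor‑scheme construction directly inside $N$, using completeness to take suprema and infima of the approximating sequences; the quotient route is the cleaner one.
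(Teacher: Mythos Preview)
Your argument is correct and, for parts (1)(b), (2) and (3), takes genuinely different routes from the paper. For (1)(a) both you and the paper extract an order-dense subchain from a dense-in-itself subspace; you just make the passage explicit via the finite-interval equivalence. For (1)(b), (i)$\Rightarrow$(ii), the paper takes an order-dense $S\subseteq N$ and asserts that $T=\overline S$ has no isolated points; you rightly flag that this can fail (a point of $S$ flanked by gaps of $N$ becomes isolated in $T$: e.g.\ $N=[0,1]\cup\{2\}\cup[3,4]$ with $S=(\natQ\cap(0,1))\cup\{2\}\cup(\natQ\cap(3,4))$), so the paper's line needs a repair (iterate Cantor--Bendixson on $T$, or choose $S$ more carefully). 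Your quotient $N/E$, combined with the preservation of scatteredness under continuous Hausdorff images of compacta, avoids this neatly; this is the same equivalence the paper later uses in \S\ref{subsection scatteredness}. For (2), the paper passes to the compact scattered completion $N^c$ (hence $0$-dimensional) and restricts, whereas you stay inside $N$ and produce clopen initial segments from cuts or consecutive pairs; both are short, and yours avoids the unproved ``compact scattered $\Rightarrow$ $0$-dimensional'' step. For (3), the paper argues order-theoretically---the Dedekind completion $S^c$ of $S=\rfs{Iso}(N)$ is a countable scattered chain (else $\omega_1$ or $\omega_1^*$ would embed in $S$), and there is a $\le 2$-to-$1$ increasing map $N\to S^c$---while you run Cantor--Bendixson using that a ccc LOTS is hereditarily Lindel\"of. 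Your route is sound, but it leans on a nontrivial external theorem about GO-spaces; the paper's computation via $S^c$ is more self-contained.
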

\begin{proof}
(1) 
(a) 
Assume that $\pair{N}{\lambda^N}$ is not a scattered space. 
Let $D \subseteq N$ be a dense-in-itself subset of $N$. 
Then $\pair{D}{\leq^N {\restriction} D}$ contains an  order-dense subset, and thus $N$ is not a scattered chain. 

(b) 
Suppose that $\pair{N}{\lambda^N}$ is compact and  
 that $\pair{N}{\leq^N}$ is not order-scattered. 
Let $S \subseteq N$ be an order-dense subset of $N$, 
and let $T$ be its topological closure in $\pair{N}{\lambda^N}$. 
Then $T$ has no isolated points, i.e. $T$ is dense-in-itself. 
By compactness, $T$ is compact and thus $T$ is perfect.
Hence $\pair{N}{\lambda^N}$ is not a scattered space.

We prove a little bit more. 
We have $N = N^c$.
Let $M$ be a linear ordering and let $M^c$ be its the Dedekind completion.
The following are equivalent:~ 
(i)~$M$ is a scattered chain, 
(ii)~$\natQ$ is not order-embeddable in $M$,
(iii)~$\natQ$ is not order-embeddable in $M^c$, and 
(iv)~$M^c$ is a scattered chain. 
Now since $\pair{M^c}{\leq^{M^c}}$ is a complete chain, $\pair{M^c}{\lambda^{M^c}}$ is a compact space. 
Therefore the previous items are equivalent to each of the following 
(v)~$\natQ$ is not order-embeddable in $M^c$, and 
(vi)~$\pair{M^c}{\lambda^{M^c}}$ is a scattered space.

\smallskip

(2) 
In the proof of Part~(1), we have seen that if $N$ is a scattered chain, 
then its Dedekind completion $N^c$ is a scattered chain and thus  
$N^c$, considered as a LOTS, is compact and topologically-scattered.
Therefore $N^c$ is $0$-dimensional, and thus $N$ is also $0$-dimensional.
\smallskip

(3) 
By the hypothesis, the set $S \eqdf \rfs{Iso}(N)$ of isolated points in $N$ 
is countable. 
Since $N$ is a scattered space, $S$ is topologically dense in $N$. 
Since $S$ is a chain, 
by the proof of Part~(1), the chain $S^c$ is scattered. 
We claim that $S^c$ is countable. 
This is so, because if $S^c$ is an uncountable scattered chain,
then $\omega_1$ or $\omega_1^*$ is order-embeddable in $S^c$ 
and thus the same holds for $S$, that is, $S$ is uncountable contradicting our assumptions. 

Next, since $S^c$ is countable, $N$ must be countable.  
Indeed, there exists a continuous (increasing) map $f$ from $N$ into $S^c$ 
such that $ | f^{-1}(x) |\leq 2$ for any $x \in N$. 
(That is, the case if 
$N \eqdf \omega+1+1+\omega^*$, 
and thus $S \eqdf \rfs{Iso}(N) = \omega+\omega^*$ 
and $S^c = \omega+1+\omega^*$.)
\end{proof}

\noindent
\begin{remark}\label{R2}
{\rm 
(1)
Proposition~\ref{prop-1.3}(1)(a) is not reversible. 
As an a example, consider the  lexicographic sum
$N \eqdf \sum_{q \in \natQ} \natZ_q$ 
of copies $\natZ_q$ of (the the chain of integers) $\natZ$, 
indexed by the chain of rationals $\natQ$. 
Then $N$ is a non-scattered linear ordering, but $N$ is a topological discrete LOTS and thus $N$ is a scattered topological space. 

(2) 
Recall that the Dedekind completion of $M \eqdf (0,1) \cap \natQ$ is $M^c = [0,1]^\natR$. 
On the other hand, $M$ is topologically dense in the Cantor set $2^\omega$  (considered as a subset of $\natR$). 

(3)
Let $X \eqdf \setof{ 1/n }{ n>0 } \cup \{-1\} \subset \natR \eqdf L$.
Then $\pair{X}{\lambda^X}$ is compact, 
but $\pair{X}{\tau^X}$ is infinite and discrete. 
\hfill$\square$
}
\end{remark}
\smallskip

\begin{itemize}
\item 
Let $\quaple{X}{\tau^X}{L}{\leq^L}$ be a GO-space.
Then $\lambda^L {\restriction} X \subseteq \tau^X$.
\end{itemize}
Indeed, let $a < b$ in $X$. 
So $(a,b)^X \in \lambda^X$ and 
thus, by the definition, $(a,b)^X = (a,b)^N \cap X \in \tau^X$. 

The following result is well-known. 
For completeness we include its proof.

\begin{proposition}
\label{prop-fqljdlfk}  
Let $\pair{X}{\tau^X}$ be a GO-space. 
Let $\pair{L}{\leq^L}$ be such that
\newline 
$\quaple{X}{\tau^X}{L}{\leq^L}$ is a GO-space.
Without loss of generality we may assume that $L$ satisfies: 
\begin{itemize}
\item[{\rm(H1)}]
$X$ is topologically dense in $\pair{L}{\lambda^L}$;
\item[{\rm(H2)}]
$\pair{L}{\leq^L}$ is a complete linear ordering.
\end{itemize}
\end{proposition}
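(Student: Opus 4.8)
The plan is to reduce to the case where $X$ is already topologically dense in $L$ and then to complete the ordering. First I would throw away the parts of $L$ that are not needed: set
$$L_0 \eqdf \setof{ p \in L }{ p \in \mathrm{cl}_{\lambda^L}(X) } = \mathrm{cl}_{\lambda^L}(X),$$
the topological closure of $X$ in $\pair{L}{\lambda^L}$, with the induced order $\leq^{L_0} \eqdf {\leq^L}{\restriction}L_0$. Since $L_0$ is a closed subspace of the LOTS $\pair{L}{\lambda^L}$, Fact~2 of Section~\ref{section-elementary} gives that $\lambda^L{\restriction}L_0 = \lambda^{L_0}$, i.e.\ the subspace topology on $L_0$ coincides with the order topology defined by $\leq^{L_0}$. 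Moreover $X \subseteq L_0$ and $X$ is topologically dense in $\pair{L_0}{\lambda^{L_0}}$ by construction, and the trace of the order topology is unaffected: $\lambda^{L_0}{\restriction}X = (\lambda^L{\restriction}L_0){\restriction}X = \lambda^L{\restriction}X = \tau^X$. Hence $\quaple{X}{\tau^X}{L_0}{\leq^{L_0}}$ is again a GO-structure for $\pair{X}{\tau^X}$, and it satisfies (H1).

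Next I would replace $L_0$ by its Dedekind completion $L_0^c$ to obtain (H2). By (D1), $\pair{L_0^c}{\leq^{L_0^c}}$ is a complete linear ordering, and by (D2) the inclusion $L_0 \hookrightarrow L_0^c$ is order-preserving, so $X \subseteq L_0 \subseteq L_0^c$ and $\leq^{L_0^c}{\restriction}X = \leq^X$. It remains to check two things: that $X$ is still topologically dense in $\pair{L_0^c}{\lambda^{L_0^c}}$, so (H1) survives the completion; and, crucially, that the trace of $\lambda^{L_0^c}$ on $X$ is still exactly $\tau^X$. For the first point, $L_0$ is topologically dense in $L_0^c$ (every nonempty open convex set of $L_0^c$ meets $L_0$, by (D3) and the observations about consecutive pairs), and density is transitive, so $X$ is dense in $L_0^c$. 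For the second point, the key computation is $\lambda^{L_0^c}{\restriction}L_0 = \lambda^{L_0}$: an open convex subset of $L_0^c$ meets $L_0$ in an open convex subset of $L_0$, and conversely every open convex subset of $L_0$ extends to an open convex subset of $L_0^c$ with the same trace (here one uses that consecutive elements of $L_0^c$ already lie in $L_0$, so endpoints of intervals behave correctly). Restricting once more to $X$ gives $\lambda^{L_0^c}{\restriction}X = \lambda^{L_0}{\restriction}X = \tau^X$, as required. Thus $\quaple{X}{\tau^X}{L_0^c}{\leq^{L_0^c}}$ is a GO-structure satisfying both (H1) and (H2), and renaming $L_0^c$ as $L$ finishes the proof.

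The routine heart of the argument is the pair of identities $\lambda^L{\restriction}X = \tau^X$ and $\lambda^{L^c}{\restriction}L = \lambda^L$ for the passage to the closure and then to the completion; these are standard facts about GO-spaces and subspaces of LOTS, but they do require a careful check that no new open sets appear and none disappear when one enlarges the ambient chain. The step I expect to be the main (though still elementary) obstacle is verifying $\lambda^{L^c}{\restriction}L = \lambda^L$: one must rule out the possibility that a convex open set of $L$ fails to be the trace of a convex open set of $L^c$, which comes down to the remarks (1) and (2) above on consecutive pairs --- a pair is consecutive in $L^c$ precisely when it already lies in $L$ and is consecutive there --- so that the "boundary behaviour" of intervals is governed entirely inside $L$ and the completion only adds cuts, which are approximated from both sides by (D3) and hence do not create isolated points or change neighbourhood filters of points of $L$.
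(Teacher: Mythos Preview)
Your overall strategy matches the paper's --- reduce to a dense inclusion and to a complete ambient chain --- but you perform the two reductions in the \emph{opposite order}: you take the closure of $X$ in $L$ first and only afterwards pass to the Dedekind completion, whereas the paper first completes (Fact~1 inside its proof) and then takes the closure inside the completion (Fact~2 inside its proof). This reversal creates a genuine gap.

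The problem is your first step. You invoke Fact~2 of \S\ref{section-elementary} to conclude $\lambda^L {\restriction} L_0 = \lambda^{L_0}$ for $L_0 = \mathrm{cl}_{\lambda^L}(X)$. But the assertion ``a closed subspace of a LOTS carries the order topology as subspace topology'' is false without completeness of the ambient chain (so Fact~2 of \S\ref{section-elementary}, as stated there, is imprecise; note that the Fact~2 appearing \emph{inside} the paper's proof of this proposition explicitly assumes the ambient order is complete). Concretely: take $L = \natR \setminus \{0\}$ with its order topology and $X = \{-1\} \cup (0,1)$ with $\tau^X = \lambda^L {\restriction} X$. Then $-1$ is isolated in $\pair{X}{\tau^X}$, and $L_0 = \mathrm{cl}_{\lambda^L}(X) = \{-1\} \cup (0,1]$. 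In the order topology $\lambda^{L_0}$, however, $-1$ is the minimum and has no immediate successor, so every basic neighbourhood $[-1,b)^{L_0}$ with $b \in (0,1]$ contains $(0,b)$; thus $-1$ is \emph{not} isolated in $\lambda^{L_0}$, hence $\lambda^{L_0} {\restriction} X \subsetneq \tau^X$ and $\quaple{X}{\tau^X}{L_0}{\leq^{L_0}}$ is not a GO-structure at all. Worse, this $L_0$ is already order-complete (every subset has $-1$ as a lower bound and its real supremum, or $-1$, as infimum), so your second step does nothing and the error persists to the end.

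The paper's order avoids exactly this pitfall: once $L$ is replaced by $L^c$ (which does not disturb $\tau^X$, by Fact~1 of the proof), the closure of $X$ is taken inside a \emph{complete} chain, and completeness is precisely what makes the closed-subspace argument valid --- gaps in the complement of $L_0$ are then bounded by actual points of $L_0$, so interval endpoints can always be pushed into $L_0$. The fix to your argument is simply to swap the two steps.
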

\begin{proof}
The proof follows from the following two facts.
\smallskip

\noindent
\begin{it}%
Fact 1. Let $\pair{N}{\leq^N}$ be a linear ordering and $\pair{N^c}{\leq^{N^c}}$ be its Dedekind completion. 
Then $\lambda^N = \tau^{N^c} {\restriction} N$. 
That is, the order topology $\lambda^N$ is the induced topology of $\tau^{N^c}$ on $N$.
\end{it}

\proof
Let $c \in N^c$. 
Then $c$ is a cut if and only if 
$c \not\in N \cup \{ \min(N), \max(N) \}$ and 
$c$ has no predecessor nor a successor in $N^c$. 
Obviously, $\lambda^N \subseteq \tau^{N^c} {\restriction} N$.
Next let $(a,b)^{N^c} \in \lambda^{N^c}$ be an open convex set in $\pair{N^c}{\leq^{N^c}}$. 
So $a,b \in N^c$. 
If $a \not\in N$ then $a = \inf ( \setof{ a' \in N }{ a' > a } )$ and 
if $b \not\in N$ then $b = \inf ( \setof{ b' \in N }{ b' < b } )$. 
So $(a,b)^{N^c} \cap N$ is an union of open convex sets $(a',b')^N$ where $a',b' \in N$.
\hfill$\square$

\smallskip

\noindent
\begin{it}%
Fact 2. 
Let $\quaple{X}{\tau^X}{N}{\leq^N}$ be a GO-structure such that 
$\pair{N}{\tau^N}$ is a complete ordering. 
Let $L$ be the topological closure of $X$ in the space $\pair{N}{\lambda^N}$. 
Then $\tau^X = \lambda^L {\restriction} X$. 
That is, $\quaple{X}{\tau^X}{L}{\leq^N \! \!  {\restriction} L}$ is a GO-structure. 
\end{it}
\proof 
It suffices to show that for every $a<b$ in $N$ there are $a'<b'$ in $L$ such that $(*$):~$(a',b')^L \cap X = (a,b)^N \cap X$. 
Fix $a<b$ in $N$.
If $a \in X$ ($b \in X$) set $a'=a$ ($b'=b$). 
Next suppose that $a \in N \setminus L$. 
Since $N$ is a complete ordering and $N \setminus L$ is open in $L$, there is a (maximal) open convex set $(\alpha,\alpha')^N$ in $N$ such that $a \in (\alpha,\alpha')^N$, $(\alpha,\alpha')^N \cap L = \emptyset$ and $\alpha, \alpha' \in L$; and we set $a' = \alpha$.

Similarly, suppose that $b \in N \setminus L$. 
Again, since $N$ is complete and $N \setminus L$ is open in $L$, there is a (maximal) open convex set $(\beta,\beta')^N$ such that $b \in (\alpha,\beta')^N$, $(\beta,\beta')^N \cap L = \emptyset$ and $\beta, \beta' \in L$; and we set $b' = \beta'$. 
Now obviously $(a',b')^L $ is as required in~($*$).
\hfill$\square$ 

\smallskip

Now let $\quaple{X}{\tau^X}{N}{\leq^N}$ be a GO-structure.
By Fact~1 we may assume that $N$ is a complete ordering. 
Finally, the result follows from Fact~2. 
\end{proof}
\begin{remark}\label{R4}
\begin{rm}
(1)
In general $\lambda^X \subsetneqq \tau^X$.
For example, consider $L = \omega_1$ and let $\rfs{Lim}$ be the set of all
countable limit ordinals.
We set $X = \omega_1 \setminus \rfs{Lim}$. 
We have: 
\\
(i) $\tau^X$ is the discrete topology, 
\\
(ii) $Y$ is topologically dense in 
$\pair{\omega_1}{ \lambda^{\omega_1} }$, and 
\\
(iii) $X$ is order-isomorphic to $\omega_1$ and thus $\pair{Y}{\lambda^{\omega_1}}$ is homeomorphic 
to the ordinal space~$\omega_1$. 
(2)
The one-point compactification of an uncountable discrete space is not a GO-space (by Lemma~\ref{lemma-2.1}), and 
there is a countable space which is not a GO-space (by Lemma~\ref{lemma-2.4}). 
\hfill$\square$
\end{rm}
\end{remark}

For completeness we recall the proof of the following fact.
\begin{proposition}{\rm\cite[Lemma 6.1]{L}}
\label{prop-dfKSXCB.}
Let $\quaple{X}{\tau^X}{L}{\leq^L}$ be a GO-structure satisfying {\rm (H1)} and {\rm (H2)}. 
So $\pair{ X }{ \, \leq^L \! {\restriction} X }$ is a linear ordering.

{\rm(1)}
If $\pair{X}{\tau^X}$ is a compact space then $X = L$ and $\tau^X = \lambda^X$.

{\rm(2)}
If $\pair{X}{\tau^X}$ is a connected space then $\tau^X = \lambda^X$.
\end{proposition}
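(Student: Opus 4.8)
The plan is to exploit the density hypothesis (H1) together with the completeness hypothesis (H2) to show that, in each case, the subspace topology $\tau^X$ cannot be strictly finer than the order topology $\lambda^X$, and that in the compact case $X$ cannot omit any point of $L$. For Part~(1), I would argue by contradiction: suppose $p\in L\setminus X$. Since $\pair{L}{\leq^L}$ is complete and $X$ is dense in $\pair{L}{\lambda^L}$, the point $p$ is not an isolated point of $L$ (else density would force $p\in X$), so $p$ is a two-sided limit of $L$; moreover $p$ must be a two-sided limit of $X$ as well, since $X$ is dense. Hence $U_-=\setof{x\in X}{x<^L p}$ and $U_+=\setof{x\in X}{x>^L p}$ are two nonempty open subsets of $X$ whose union is $X$ and which are disjoint. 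I claim they are also both closed in $X$: a point $x\in U_-$ has $x<^L p$, and by completeness of $L$ there is no element of $L$ between $x$ and the first point $\geq p$ missing only if $x$ is the immediate predecessor, but in general one uses that the interval $(x,p)^L$ is nonempty open in $L$ disjoint from $U_+$ unless... — more cleanly: $U_-=X\cap(\leftarrow,p)^L$ and since $p\notin X$ we also have $U_-=X\cap(\leftarrow,p]^L$, which is closed in $\lambda^L{\restriction}X$; similarly $U_+=X\cap[p,\to)^L=X\cap(p,\to)^L$ is closed. So $X=U_-\sqcup U_+$ is a partition of the compact space $\pair{X}{\tau^X}$ into two nonempty clopen pieces — this is fine, compactness does not forbid disconnectedness, so I need a different finish.

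The correct finish for Part~(1) uses compactness directly: $X$ is a closed subset of the compact space $\pair{L}{\lambda^L}$? No — $L$ need not be compact a priori. Better: since $\pair{X}{\tau^X}$ is compact and $X$ is dense in the Hausdorff space $\pair{L}{\lambda^L}$, the image of a compact set under the inclusion is compact hence closed, so $X$ is closed in $L$; combined with density $X=L$. Then $\tau^X=\lambda^L{\restriction}X=\lambda^L=\lambda^X$ by Fact~2 of Proposition~\ref{prop-fqljdlfk} (the induced topology on a closed — here the whole — subspace is the order topology), giving the claim. The only subtlety is justifying ``compact dense subset of Hausdorff space is the whole space,'' which is the standard fact that a compact subspace of a Hausdorff space is closed; I would cite \cite{E}.

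For Part~(2), I would show $\tau^X=\lambda^X$ by proving every $\tau^X$-open set is $\lambda^X$-open; since $\lambda^X\subseteq\tau^X$ always holds (this is the displayed bullet before Proposition~\ref{prop-fqljdlfk}), it suffices to show that no point of $X$ is isolated in a way visible to $\tau^X$ but not $\lambda^X$, i.e.\ that a basic $\tau^X$-neighborhood of the form $(a,b)^L\cap X$ with $a$ or $b$ outside $X$ is already $\lambda^X$-open. Suppose $U\in\tau^X$ and $x\in U$; pick $a<^L x<^L b$ with $(a,b)^L\cap X\subseteq U$. If $a\in X$ and $b\in X$ we are done. Otherwise say $a\notin X$; then by (H2) either $a$ has an immediate successor $a^+$ in $L$ or $a$ is a limit from the right. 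In the first case $a^+\in X$ would give $(a,b)^L\cap X=[a^+,b)^L\cap X$, again $\lambda^X$-open after shrinking; if $a^+\notin X$ this iterates, but since $L$ has no consecutive elements when $X$ is connected — here is where I invoke Proposition~\ref{prop-klllkd.}(2) and Proposition~\ref{prop-fqljdlfk}(H1): a connected GO-space $X$ dense in complete $L$ forces $L$ itself to be connected, hence $L$ has no cuts and no consecutive pairs, hence $a$ is a two-sided limit in $L$, hence (by density of $X$) a two-sided limit of $X$; so $\setof{x'\in X}{x'>^L a}$ is $\lambda^X$-open and one replaces the left end of the basic neighborhood by any $a'\in X$ with $a<^L a'<^L x$, which exists by density of $X$. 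Treating the right end symmetrically, every $x\in U$ has a $\lambda^X$-neighborhood inside $U$, so $U\in\lambda^X$. The main obstacle I anticipate is bookkeeping the case analysis on the endpoints cleanly — in particular confirming that connectedness of $X$ genuinely propagates to connectedness (no cuts) of $L$, which I would extract as a one-line consequence of (H1) plus Proposition~\ref{prop-klllkd.}(2) rather than re-deriving it.
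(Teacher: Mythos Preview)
Your Part~(1) argument---a compact subspace of a Hausdorff space is closed, and a closed dense subset is the whole space---is exactly the paper's proof, once you discard the false starts. (One small correction: by (H2) the chain $L$ \emph{is} complete, so $\pair{L}{\lambda^L}$ \emph{is} compact a priori; but your argument does not use this, so the slip is harmless.)

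Your Part~(2) is correct but takes a different route from the paper. The paper argues internally to $X$: every final segment (and every initial segment) of $\pair{X}{\leq^L{\restriction}X}$ is $\tau^X$-closed; connectedness of $\pair{X}{\tau^X}$ then forbids any proper nonempty clopen final segment, which forces $X$ to have no consecutive pair and no cut in its own Dedekind completion; since $X$ is dense in the complete chain $L$, this yields $X\setminus\{\min L,\max L\}=L\setminus\{\min L,\max L\}$, and hence $\tau^X=\lambda^X$. You instead push connectedness \emph{outward} first---$L=\mathrm{cl}_L(X)$ is connected because the closure of a connected set is connected---then use Proposition~\ref{prop-klllkd.}(2) to conclude $L$ has no consecutive pairs, and finally replace any endpoint $a\in L\setminus X$ of a basic $\tau^X$-neighborhood by a nearby $a'\in X$ furnished by density. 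Both approaches work; the paper's is shorter and avoids the endpoint bookkeeping you flagged as the main obstacle, while yours has the virtue of making connectedness of $L$ explicit rather than obtaining it only implicitly via $X=L$ modulo endpoints.
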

\begin{proof}
(1) 
Since $X$ is compact then $X$ is closed in $\pair{L}{\lambda^L}$. 
By~(H1), $X = L$ and thus $\tau^X = \lambda^X$. 

(2) 
Next suppose that $\pair{X}{\tau^X}$ is connected. 
Then $X$, considered as the LOTS $\pair{X}{\lambda^X}$, has no consecutive point and no cut because for each final subset of $\pair{ X }{ \leq^L {\restriction} X }$ is closed in $\pair{X}{\tau^X}$. 
Therefore, $ X \setminus \{ \min(L), \max(L) \} = L \setminus \{ \min(L), \max(L) \}$ and thus  $\tau^X = \lambda^X$.
\end{proof}


\section{Proof of Main Theorem}
\label{proof main theorem}
As one of the main parts of Main Theorem~\ref{main-thm} Proposition~\ref{prop-kfjdsluzreo} implies that it suffices to assume that $\pair{L}{\leq}$ is a scattered linear order. 
To prove this result, we use Corollary~\ref{cor-hfdsk}
(in \S\ref{subsection counterexample}) which says that a GO-space does not contain an infinite countable relatively discrete closed subset. 

Recall that $\pair{X}{\tau}$ is a GO-space means that 
$\quaple{X}{\tau}{L}{\leq}$ is a GO-structure.
So, we have $X \subseteq L$. 
Also for simplicity $\pair{X}{\tau}$ is denoted by $X$.
In the sequel, by Proposition~\ref{prop-fqljdlfk}, 
we assume that the GO-structure $\quaple{X}{\tau}{L}{\leq}$
satisfies properties (H1) and (H2).
\subsection{A hereditarily GO-space satisfies c.c.c. property}
\label{subsection preliminary}

\begin{lemma}\label{lemma-2.1}
	
Assume that $\quaple{X}{\tau}{L}{\leq}$ is a $0$-dimensional hereditarily GO-structure. 
Then

\begin{itemize}
\item[{\rm(1)}]
$X$ satisfies c.c.c. property\,.

\item[{\rm(2)}]
The linear orderings $\omega_1$ and $\omega_1^*$ 
are not order-embeddable in $X$. 
\end{itemize}
\end{lemma}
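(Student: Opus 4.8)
The plan is to deduce both statements from the hypothesis that every continuous image of $X$ is a GO-space, together with the $0$-dimensionality of $X$ and the standard fact (Fact~5/Fact~6 in the LOTS argument of \S\ref{section-elementary}) that a countable closed relatively discrete subset of a $0$-dimensional space can be ``read off'' as a continuous image. First I would observe that (2) follows from (1): if $\omega_1$ (or $\omega_1^*$) were order-embeddable in $X$, then, since consecutive pairs in $\omega_1$ give rise to at most countably many ``collapses'', one extracts an order copy of $\omega_1$ inside $X$ whose successor ordinals are isolated from the left in the subspace, producing an uncountable family of pairwise disjoint nonempty open intervals $(x_\alpha, x_{\alpha+1})^X$ (or their one-sided analogues using $0$-dimensionality to fatten endpoints to clopen pieces), contradicting c.c.c. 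So the real content is (1).

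For (1), I would argue by contradiction: suppose $\{V_i : i \in I\}$ is an uncountable family of pairwise disjoint nonempty open subsets of $X$, and (shrinking, using $0$-dimensionality) assume each $V_i$ is clopen. The key step is to manufacture from this family a continuous image of $X$ that is \emph{not} a GO-space. The natural candidate is the one-point compactification of an uncountable discrete space: pick for each $i$ a point $d_i \in V_i$, and collapse everything outside $\bigcup_i V_i$ — and also collapse each $V_i$ minus $d_i$ into a single ``absorbing'' direction — to a single point $*$. Concretely, define the equivalence relation $\approx$ on $X$ by: $x \approx y$ iff $x = y$, or $x,y$ both lie in $X \setminus \{d_i : i \in I\}$. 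Then $X/{\approx}$ has underlying set $\{[d_i] : i \in I\} \cup \{*\}$; the $d_i$-classes are isolated (image of the clopen $V_i \setminus (V_i \setminus\{d_i\})$... more carefully, I would first pass to the closed subspace structure and use that each $\{d_i\}$ is relatively clopen in $\bigcup_j V_j$, with the complement of $\bigcup_j V_j$ closed), and every neighborhood of $*$ omits only finitely many $[d_i]$. So $X/{\approx}$ is the one-point compactification $\alpha D$ of an uncountable discrete set $D$. One must check $X/{\approx}$ is Hausdorff (this uses that $X\setminus\bigcup V_i$ is closed and each $V_i$ clopen, so classes are separated by saturated open sets) and that the quotient map is continuous and quotient, hence the quotient space is a continuous image of $X$. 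Finally, $\alpha D$ for uncountable $D$ is not a GO-space: a GO-space is first countable at points of countable character, but more to the point $\alpha D$ is not even hereditarily normal / fails to embed in any LOTS because a LOTS containing an uncountable relatively discrete subset with a single accumulation point would force that point to have uncountable cofinal/coinitial character realized by a single convex neighborhood base, impossible — alternatively cite that $\alpha D$ uncountable is not monotonically normal while GO-spaces are. This contradicts the hereditarily-GO hypothesis, proving $X$ is c.c.c.

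The main obstacle I anticipate is the bookkeeping in the quotient: one has to be careful that the relation $\approx$ really produces a Hausdorff quotient and that the topology on $X/{\approx}$ is exactly the one-point compactification topology (rather than something finer, where $*$ is not even a limit point, or coarser). The clean way is to do it in two moves: first restrict attention to the clopen set $W = \bigcup_{i} V_i$ (a continuous image of $X$ via the retraction collapsing $X \setminus W$ to a point, assuming $X\setminus W \ne \emptyset$; if $X = W$ adjoin a dummy isolated point harmlessly), so we are in a situation where the ``outside'' is a single point $p$; then in $W \cup \{p\}$ collapse each $V_i$ to the pair-space $\{d_i, p\}$ by sending all of $V_i \setminus \{d_i\}$ to $p$ — this map is continuous because $V_i$ is clopen, so preimages of the two relevant open sets are $V_i\setminus\{d_i\}$ and $\{d_i\}$, both open in $V_i$ hence in $X$ — and one checks directly that neighborhoods of $p$ in the image are exactly cofinite-in-$D$ sets plus $p$. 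This yields $\alpha D$ cleanly, and the contradiction follows.
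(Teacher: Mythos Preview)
Your overall strategy matches the paper's: manufacture the one-point compactification $\alpha D$ of an uncountable discrete set $D$ as a continuous image of $X$, then argue $\alpha D$ is not a GO-space. But the execution has two genuine gaps.

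First, the quotient construction does not work as written. In both attempts you keep the chosen points $d_i \in V_i$ as singleton classes and send everything else to the star; for the quotient to be Hausdorff (equivalently, for $[d_i]$ to be isolated) you need $\{d_i\}$ to be open in $X$, and nothing guarantees this. Your assertion that ``$\{d_i\}$ is open in $V_i$'' is precisely the missing hypothesis --- $d_i$ was an arbitrary point of $V_i$. (Also, $W=\bigcup_i V_i$ is open but need not be clopen, so the ``retraction'' in your two-step version is not what you think.) The paper's fix is to collapse each clopen $V_i$ \emph{entirely} to a point $u_i$, which is then automatically isolated, and to collapse $X\setminus\bigcup_{i\ne i_0}V_i$ to the star $u$ (one index $i_0$ is set aside so this class is nonempty). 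You correctly anticipated the second subtlety --- the quotient topology $\tau'$ may be strictly finer than the one-point-compactification topology (for instance if $\bigcup_{i\ne i_0}V_i$ happens to be clopen, $u$ is isolated in $\tau'$) --- but did not resolve it. The paper simply passes to the coarser topology $\tau''$ with subbase $\{\{u_i\}:i\ne i_0\}\cup\{X'\setminus\{u_i\}:i\ne i_0\}$, notes $\tau''\subseteq\tau'$ so $X\to(X',\tau'')$ is still continuous, and observes that $(X',\tau'')$ is exactly $\alpha D$.

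Second, your argument that $\alpha D$ is not a GO-space is incomplete, and one of the suggested routes is actually wrong: $\alpha D$ \emph{is} monotonically normal (take $G(d,U)=\{d\}$ for $d\in D$ and $G(*,U)=U$; the axiom is immediate), so that criterion gives no contradiction. The paper's argument is short and elementary: a compact GO-space is a LOTS (Proposition~\ref{prop-dfKSXCB.}(1)); in any linear order on $\alpha D$ one side of the accumulation point $u$, say $(-\infty,u)$, is uncountable; choose $y<u$ with $(-\infty,y)$ infinite; then $(-\infty,y]$ is closed in the compact space, hence compact, but lies entirely inside the discrete set $D$ --- an infinite discrete compact space, contradiction.
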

\begin{proof}
(1) 
Since $\pair{X}{\tau}$ is  0-dimensional, any nonempty open subset of $\pair{X}{\tau}$ contains a clopen convex subset of the form $(a,b)^X \eqdf (a,b)^L \cap X$ where $a, b \in L$. 
By contradiction, assume that  $\setof{ U_i }{i \in I }$ is an uncountable family 
of pairwise nonempty clopen convex subsets of $X$. 
So each $U_i$ is of the form $(a_i,b_i)^X$ with $a_i, b_i \in L$. 

Fix $i_0 \in I$.
Let $U \eqdf \bigcup\SETOF{U_i}{i \in I \setminus \{i_0\} }$ and  \dfn{$\sim$}{$\sim$} be the equivalence relation on $X$ defined as follows: 
$x \sim y$ if and only if $x,y \in Y {\setminus} U$ 
or there is $i \in I$ such that $x, y \in U_i$. 
Denote by $X'$ the set $X/{\sim}$ 
and by  $\fnn{ f }{ X }{ X' }$ the quotient map. 
We endow $X'$ with the quotient topology $\tau'$ on $X'$. 
So $V' \in \tau'$ if and only if $f^{-1}[V'] \in \tau$. 
In particular, $u_i \eqdf f[U_i]$ is an isolated point in $X'$ 
for any $i \in I \setminus \{i_0\}$. 
Setting $u = f[X \setminus U]$ we have 
$X' = \{u\} \cup \bigcup\SETOF{u_i}{i \in I \setminus \{i_0\} }$ 
and the set  
$$
\SETOF{ \{u_i\} }{ i \in I \setminus \{i_0\} } 
\ \cup \ \SETOF{ X' \setminus \{u_i\} }{ i \in I \setminus \{i_0\} } 
$$ 
is a subbase of a topology $\tau''$ on $X'$ satisfying: 
\begin{itemize} 
\item[{\rm(1)}]
$\tau'' \subseteq \tau'$ and thus 
$\fnn{ f }{ \pair{X}{\tau} }{ \pair{X'}{\tau''} }$ is continuous,  
\item[{\rm(2)}] 
$\pair{X'}{\tau''}$ is compact (this follow from the definition of $\tau''$), and 
\item[{\rm(3)}]
$\pair{X'}{\tau''}$ is homeomorphic to the one-point compactification of the uncountable discrete set. 
This is so because $u$ is the unique accumulation point of $\pair{X'}{\tau''}$.
\end{itemize}
We show that $\pair{X'}{\tau''}$ is not a GO-space.
By contradiction, suppose that
\newline
 $\quaple{X'}{\tau''}{L'}{\leq'}$ is a GO-structure. 
Since $\pair{X'}{\tau''}$ is compact, by Proposition~\ref{prop-dfKSXCB.}(1), 
$L' = X'$ and $\tau'' = \lambda' \eqdf \lambda^{\leq'}$.
So it suffices to prove that 
\begin{itemize}
\item[{\rm(4)}] 
$\pair{X'}{\tau''} \eqdf \pair{X'}{\lambda'}$ is not a LOTS. 
\end{itemize}
Assume that $\pair{L'}{\leq'}$ is a chain.   
Hence, for instance, $(-\infty,u)^{L'}$ is uncountable.
Consider any $y \in (-\infty,u)$ such that $(-\infty,y)^{L'}$ is infinite. 
By the definition, $(-\infty,y]^{L'}$ is infinite, discrete, closed and thus compact, that contradicts the fact that $u \not\in (-\infty,y]^{L'}$. 
We have proved that $X$ satisfies c.c.c. property\,.
\smallskip

(2) follows from Part~(1).
\end{proof}
Now remind the classical result which is due to
Mazurkiewicz and Sierpi\'{n}ski (for example, see ~\cite[Theorem~17.11]{K}, \cite[Ch. 2, Theorem~8.6.10]{S}).  
\begin{lemma}
\label{lemma-2.2}
Every topologically scattered compact and countable space
is homeomorphic to a countable and successor ordinal space.
\end{lemma}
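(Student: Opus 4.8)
The plan is to argue by transfinite induction on the Cantor--Bendixson rank of $X$. First I would recall the derivation $X^{(0)}=X$, $X^{(\alpha+1)}=\{\text{non-isolated points of }X^{(\alpha)}\}$, $X^{(\lambda)}=\bigcap_{\alpha<\lambda}X^{(\alpha)}$, and establish the structural facts that drive the induction. Since $X$ is scattered, every nonempty $X^{(\alpha)}$ is again compact and scattered, hence has an isolated point, so $X^{(\alpha+1)}\subsetneq X^{(\alpha)}$ as long as $X^{(\alpha)}\neq\emptyset$; choosing a point of $X^{(\alpha)}\setminus X^{(\alpha+1)}$ for each $\alpha$ below the rank embeds that rank into $X$, so the rank $\rho(X)$ is a countable ordinal. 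By compactness (a decreasing family of nonempty closed sets has nonempty intersection) $\rho(X)$ cannot be a limit ordinal, so $\rho(X)=\beta+1$ for some $\beta$, and $X^{(\beta)}$ is a nonempty compact set with no non-isolated point, i.e. a nonempty finite set $\{p_1,\dots,p_n\}$. I would also record that a compact Hausdorff scattered space is zero-dimensional (scattered implies totally disconnected, and compact Hausdorff plus totally disconnected gives zero-dimensional) and that a countable compact Hausdorff space is second countable (finite intersections of a countable separating family of clopen sets form a base, by compactness); consequently every point of $X$ has a countable decreasing base of clopen neighbourhoods.

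The base case $\beta=0$ is immediate: then $X=X^{(0)}$ is discrete and compact, hence finite, hence homeomorphic to a finite successor ordinal. For the inductive step, assume the statement for every countable compact scattered space of rank $\le\beta$ and let $\rho(X)=\beta+1$ with $X^{(\beta)}=\{p_1,\dots,p_n\}$. Using zero-dimensionality I would first partition $X$ into clopen blocks $V_1,\dots,V_n$ with $p_i\in V_i$. Each $V_i$ is countable, compact and scattered, and, being clopen, satisfies $V_i^{(\beta)}=X^{(\beta)}\cap V_i=\{p_i\}$. If we show each such singleton-topped block is homeomorphic to a countable successor ordinal $\mu_i+1$, these homeomorphisms glue, with the block order $V_1<\dots<V_n$, to a homeomorphism of $X$ onto the ordinal sum $(\mu_1+1)+\dots+(\mu_n+1)$, again a countable successor ordinal. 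Hence it suffices to treat the case $X^{(\beta)}=\{p\}$.

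So assume $X^{(\beta)}=\{p\}$. Pick a decreasing clopen neighbourhood base $X=W_0\supseteq W_1\supseteq\cdots$ at $p$ with $\bigcap_k W_k=\{p\}$, and set $A_k=W_k\setminus W_{k+1}$; infinitely many $A_k$ are nonempty because $p$ is not isolated. Each $A_k$ is clopen in $X$, hence compact, scattered and countable, and being clopen it satisfies $A_k^{(\alpha)}=X^{(\alpha)}\cap A_k$ for every $\alpha$; since $p\notin A_k$ this gives $A_k^{(\beta)}=\emptyset$, so $\rho(A_k)\le\beta<\beta+1$ and the induction hypothesis produces a homeomorphism $A_k\cong\zeta_k$ onto a countable successor ordinal $\zeta_k$.

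It remains to reassemble. I claim $X$ is homeomorphic to $\eta+1$, where $\eta=\sum_{k\in\omega}\zeta_k$ is the ordinal sum (a countable limit ordinal, being an infinite sum of nonzero ordinals). Ordering the blocks by $A_0<A_1<\cdots$ identifies $\bigsqcup_k A_k$ order-theoretically with $\eta$, sending $A_k$ onto the segment $S_k=[\sigma_k,\sigma_{k+1})$ with $\sigma_k=\sum_{j<k}\zeta_j$; since each $\zeta_j$ is a successor ordinal, each $\sigma_k$ is $0$ or a successor ordinal, so each $S_k$ is clopen in $\eta$, and these clopen identifications $A_k\cong S_k$ assemble to a homeomorphism $\bigsqcup_k A_k\cong\eta$. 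Finally the basic clopen neighbourhoods $W_k=\{p\}\cup\bigsqcup_{j\ge k}A_j$ of $p$ correspond exactly to the sets $\{\eta\}\cup\bigcup_{j\ge k}S_j=[\sigma_k,\eta]$, which (since $\sigma_k$ is $0$ or a successor) are precisely a neighbourhood base of $\eta$ in the order topology of $\eta+1$; so adjoining $p$ as the top point yields exactly $\eta+1$, completing the induction. The conceptual core here is the Cantor--Bendixson induction; the step requiring the most care is this final reassembly, where one must verify that the ordinal segments $S_k$ are genuinely clopen in $\eta$ — which is exactly where the fact that each $\zeta_k$ is a \emph{successor} ordinal is used — and that the neighbourhood filter at $p$ matches that of $\eta$. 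Establishing that $\rho(X)$ is a countable successor ordinal and that $p$ has a countable clopen neighbourhood base are the only other places where compactness and countability of $X$ are genuinely invoked.
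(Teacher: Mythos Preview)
Your proof is correct and carefully executed. The Cantor--Bendixson induction you run is the standard route to this classical theorem of Mazurkiewicz and Sierpi\'nski, and you have attended to the points that genuinely require care: that the rank of $X$ is a countable \emph{successor} ordinal (compactness forbids a limit rank), that derived sets commute with restriction to clopen subsets so that $V_i^{(\beta)}=X^{(\beta)}\cap V_i$ and $A_k^{(\beta)}=\emptyset$, and --- in the final reassembly --- that each segment $S_k=[\sigma_k,\sigma_{k+1})$ is clopen in $\eta$ precisely because every $\zeta_j$ is a successor ordinal, so that the neighbourhood filter at $p$ matches the neighbourhood filter of the top point $\eta$ in $\eta+1$.

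There is, however, nothing in the paper to compare your argument against: the authors do not prove Lemma~\ref{lemma-2.2}. They state it as a classical result and simply cite \cite[Theorem~17.11]{K} and \cite[Ch.~2, Theorem~8.6.10]{S}; no argument appears in the text. Your write-up therefore supplies a complete, self-contained proof where the paper is content to quote the literature, and the route you take --- induction on the Cantor--Bendixson rank, reduction to a single top point, and decomposition along a clopen neighbourhood base --- is essentially the standard one found in those references.
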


\subsection{A hereditarily GO-space has no countable closed and relatively discrete subsets}
\label{subsection counterexample}

\begin{lemma}
\label{lemma-2.3}
Let $M$ be an order-scattered LOTS. 
If $M$ contains a closed and countable relatively discrete subset $D$,
then $D$ is a continuous image of~$M$.
\end{lemma}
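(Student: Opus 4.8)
The strategy mirrors Fact~5 in Section~\ref{section-elementary}, but now we cannot assume $M$ is $0$-dimensional; only order-scatteredness is given. However, by Proposition~\ref{prop-1.3}(2) an order-scattered LOTS \emph{is} $0$-dimensional, so in fact the clopen machinery of Fact~5 is available after all. The plan is to enumerate $D = \setof{d_n}{n \in \omega}$, use $0$-dimensionality to separate the points of $D$ by clopen convex sets, and then collapse everything outside a chosen basepoint's neighbourhood structure to a single point, producing a quotient homeomorphic to $D$.

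\textbf{Key steps.}
First I would invoke Proposition~\ref{prop-1.3}(2): since $M$ is order-scattered, $\pair{M}{\lambda^M}$ is $0$-dimensional, hence has a base of clopen convex sets. Since $D$ is relatively discrete, for each $d \in D$ pick an open set $W_d$ with $W_d \cap D = \{d\}$, and shrink $W_d$ to a clopen convex $U_d \ni d$ with $U_d \cap D = \{d\}$. Second, fix $d_0 \in D$ and let $U = \bigcup \setof{U_d}{d \in D \setminus \{d_0\}}$. Define the equivalence relation $\approx$ on $M$: declare $x \approx y$ iff $x = y$, or $x,y \in U_d$ for some common $d \in D \setminus \{d_0\}$, or $x,y \notin U$. (One must check this is genuinely an equivalence relation — transitivity holds because the $U_d$ for distinct $d$ need not be disjoint, so one should first replace the family $\setof{U_d}{d \neq d_0}$ by a \emph{disjoint} refinement, using $0$-dimensionality again to carve each $U_d$ into a clopen convex piece still meeting $D$ only in $d$ and pairwise disjoint; the standard way is to process the $d_n$ in order and remove previously used clopen pieces.) Third, form the quotient $M/{\approx}$ with the quotient topology, and verify: (a) each class $[d]$ for $d \neq d_0$ is open (its preimage is the clopen $U_d$), so its image is isolated; (b) the single class containing $M \setminus U$ (which includes $d_0$) is the unique non-isolated point, and its image is not isolated because $d_0$ is not isolated in $D$ — here we use that $D$ is \emph{closed}, so that $M \setminus U$ together with the collapsed classes still accumulates at $[d_0]$ exactly the way $D$ accumulates at $d_0$; (c) the quotient is Hausdorff, since the isolated points are clopen and the remaining point is separated from each by the complementary clopen set. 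Finally, the map $d \mapsto [d]$ is a continuous bijection from $D$ onto $M/{\approx}$ between the discrete-plus-one-limit-point structures, and it is a homeomorphism because both topologies are the "one-point" topology determined by which sequences converge to $d_0$; the quotient map $M \to M/{\approx}$ is the desired continuous surjection.

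\textbf{Main obstacle.}
The delicate point is arranging the clopen sets $U_d$ ($d \neq d_0$) to be \emph{pairwise disjoint} while each still isolates its point of $D$, so that $\approx$ is transitive and the quotient topology behaves. In Fact~5 this was swept under the rug; here, with $D$ merely relatively discrete (not discrete) and $M$ only order-scattered, one must genuinely use $0$-dimensionality to perform a disjointification, e.g.\ by well-ordering $D \setminus \{d_0\}$ as $\setof{e_k}{k < \kappa}$ and setting $U'_{e_k} = U_{e_k} \setminus \bigcup_{j < k} U'_{e_j}$ after checking each $U'_{e_k}$ remains clopen and still contains $e_k$ — which works because $e_k \notin U_{e_j}$ for $j \neq k$. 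A second subtlety is confirming that after the collapse the point $[d_0]$ is not isolated: this is precisely where closedness of $D$ is used, guaranteeing that no open neighbourhood of $d_0$ in $M$ is contained in $\{d_0\} \cup (M \setminus \overline{D})$-type sets in a way that would trivialize the accumulation. Everything else is routine verification of quotient-topology Hausdorffness and continuity.
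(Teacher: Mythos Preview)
Your overall strategy—invoke Proposition~\ref{prop-1.3}(2) for $0$-dimensionality, isolate the points of $D$ by clopen convex sets $U_d$, then collapse via an equivalence relation—is the same as the paper's. But there is a genuine error in your understanding of the target space.

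Since $D$ is \emph{relatively discrete}, every point of $D$, including $d_0$, is isolated in the subspace topology; $D$ is a \emph{discrete} space, not a ``discrete-plus-one-limit-point'' space. Your assertion that ``$d_0$ is not isolated in $D$'' is false, and your verification step~(b) is aimed at the wrong conclusion. To obtain a continuous surjection onto the discrete space $D$, you need the class $[d_0]=M\setminus U$ to be \emph{open}, i.e.\ $U=\bigcup_{d\neq d_0}U'_d$ to be \emph{closed}. Your disjointification does nothing toward this; an infinite union of pairwise disjoint clopen sets need not be closed.

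This is precisely where closedness of $D$ enters, and you have its role reversed. The paper handles it by showing that in a $0$-dimensional LOTS a relatively discrete set is \emph{strongly discrete}: for clopen convex $U_d$ with $U_d\cap D=\{d\}$ one has ${\rm acc}(\setof{U_d}{d\in D})\subseteq{\rm acc}(D)$. (Sketch: if a convex neighbourhood $(a,b)$ of $x$ meets $U_d$ while $d\le a$, convexity forces $[d,a]\subseteq U_d$, whence $(d,a]\cap D=\emptyset$; so at most one such $d$ can occur on each side of $(a,b)$, and all but at most two of the $U_d$ meeting $(a,b)$ have $d\in(a,b)$.) Since $D$ is closed and relatively discrete, ${\rm acc}(D)=\emptyset$; hence the family $\setof{U_d}{d\in D}$ is locally finite and $\bigcup_d U_d$ is closed. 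That local-finiteness argument is the step your outline is missing.
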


\begin{proof}
First we introduce a new definition.
Let $Y$ be a topological space. 
For a family $\cmV$ of pairwise disjoint subsets of $Y$, we denote by \dfn{${\rm acc}(\cmV)$}{${\rm acc}(\cmV)$} the set of \dfn{accumulation points of $\cmV$}{accumulation points of a family of sets}. By definition,
$x \in {\rm acc}(\cmV)$ if and only if for every neighborhood $W$ of $x$ the set $\setof{ V \in \cmV }{ V \cap W \neq \emptyset }$
is infinite. 
So if $Z \subseteq Y$, ${\rm acc}(Z) = {\rm acc}\bigl(\SETOF{ \{z \} }{ z \in Z } \bigr)$.
We  say that a subset $D$ of a space $Y$ is \dfn{strongly discrete}{strongly discrete (subset)} whenever $\cmU_D$ satisfies 
${\rm acc}(D) = {\rm acc}(\cmU_D)$.
The next result is well-known. 
For completeness we recall its proof.

\smallskip

\noindent
\begin{it}%
Fact~1.
Let $M$ be a $0$-dimensional LOTS and $D \subseteq M$.
If $D$ is relatively discrete then $D$ is strongly discrete.
\end{it}
\proof
For $d \in D$ let $U_d \eqdf (a_d, b_d)^L$ be a clopen convex set such that $D \cap U_d = \{d\}$. 
Note the following property ($*$):~if $d < d'$ then $x < x'$ for every $x \in U_d$ and $x' \in U_{d'}$. 
We set $\cmU_D = \setof{ U_d }{ d \in D }$.
Obviously, ${\rm acc}(D) \subseteq {\rm acc}(\cmU_D)$.
Conversely, let $x \in {\rm acc}(\cmU_D)$ and $V$ be a neighborhood of $x$. 
We may assume that $V$ is of the form $(a,b)$ with $a<b$ in $M$. 
Therefore, by~($*$), there are infinitely many $d_i$'s such that $d_i \in U_{d_i} \subseteq V$, and thus $x \in {\rm acc}(\cmU_D) \subseteq {\rm acc}(D)$.
\hfill$\square$

\smallskip

Since $\pair{M}{\leq^M}$ is an order-scattered LOTS,  
by Proposition \ref{prop-1.3}(2), $\pair{M}{\lambda^M}$ is $0$-dimensional. 
By Fact~1, 
let $\cmU_D \eqdf \setof{ U_d }{ d \in D }$ be a family of clopen convex subsets of $M$ such that $U_d \cap D = \{ d \}$ for $d \in D$ and ${\rm acc}(D) = {\rm acc}(\cmU_D)$. 
Since $D$ is closed, ${\rm acc}(D) = \emptyset$ and thus
$\bigcup \cmU_D$ is a clopen subset of $M$.

Let $d_0 \in D$ be fixed and 
$\cmU_{D {\setminus} \{d_0\}} = \setof{ U_d }{ d \in D {\setminus} \{d_0\} }$. 
So $\bigcup \cmU_{D {\setminus} \{d_0\}} = \bigcup \cmU_D \setminus U_{d_0}$ is a clopen subset of $M$.
Let \dfn{$\approx$}{$\approx$} be the equivalence relation on $M$ defined by $x \approx y$ whenever $x, y \in U_D$ for some $d \in D \setminus \{d_0\}$, or 
$x,y \in M \setminus \bigcup \cmU_{D {\setminus} \{d_0\}}$. 
For each $x \in M$ there is an unique $f(x) \in D$ such that $f(x) \approx x$. 
It is easy to check that the mapping $f\colon M \to D$ is onto and that $f$ is continuous: 
this is so, because $f^{-1}(d)$ is clopen in $M$ for any $d \in D$.
\end{proof}
\smallskip

Our next result, which is apparently well-known, strengthens Fact 6 from the proof of Corollary~\ref{cor-456789123nn}.
Consider again $\omega$ as a discrete space. 
Let $\cmU$ on $\omega$ be a non-principal ultrafilter on $\omega$ and let $*$ be a new element with $* \not\in \omega$.
The space $\natN^* := \omega \cup \{*\}$ is equipped with the topology induced from the Stone--\v Cech compactification $\beta \natN$.

\begin{lemma}
\label{lemma-2.4} 
The countable space $\natN^*$ is not a GO-space.
\end{lemma}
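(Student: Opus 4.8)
The goal is to show that $\natN^* = \omega \cup \{*\}$, with the topology inherited from $\beta\natN$ where $*$ corresponds to a free ultrafilter $\cmU$, is not homeomorphic to any subspace of a LOTS. The plan is to argue by contradiction: suppose $\quaple{\natN^*}{\tau}{L}{\leq^L}$ is a GO-structure. Since $\natN^*$ is compact (it is a closed subspace of $\beta\natN$), Proposition~\ref{prop-dfKSXCB.}(1) applies, so we may assume $L = \natN^*$ and $\tau = \lambda^L$; that is, $\natN^*$ is actually a compact LOTS under some linear order $\leq$. The point $*$ is the unique non-isolated point, and every point of $\omega$ is isolated.

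\textbf{Key steps.} First I would locate $*$ in the order $\leq$. Since $\natN^*$ is a compact LOTS, it has a minimum and a maximum (Proposition~\ref{prop-klllkd.}(1)). If $*$ were not the maximum, consider the final segment $(\,*,\,\max\,]^L$: it is clopen and consists entirely of isolated points of $\natN^*$, hence is discrete; being also closed in the compact space $\natN^*$ it is compact, hence finite. Similarly $[\,\min,\,*)^L$ is finite if $*$ is not the minimum. But then $\natN^*$ would be finite, a contradiction. So (after possibly reversing the order) we may assume $*$ is the maximum of $L$. Now $\omega = \natN^* \setminus \{*\}$ is the initial segment $[\min,*)^L$, an infinite discrete subset whose supremum in $L$ is $*$. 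Pick a strictly increasing sequence $\seqnn{n_k}{k \in \omega}$ in $\omega$ that is cofinal in $[\min,*)^L$; such a sequence exists because $L$ is countable and $*=\sup^L(\omega)$ is a limit from below with no predecessor (if $*$ had an immediate predecessor $p$ in $L$, then $[\min,p]^L$ would be clopen, discrete, compact, hence finite — contradiction). This cofinal sequence $\seqnn{n_k}{k}$ converges to $*$ in the order topology $\lambda^L = \tau$.

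\textbf{The contradiction.} The set $A = \setof{n_k}{k \in \omega}$ is then a sequence of isolated points of $\natN^*$ converging to $*$ in $\tau$. But convergence of a sequence to $*$ in the Stone--\v Cech topology on $\natN^*$ means exactly that $A$ (more precisely, every cofinite subset of the sequence's range) lies in the ultrafilter $\cmU$, \emph{and} that every subset of $A$ either lies in $\cmU$ or has complement in $\cmU$ — equivalently, the neighborhood filter of $*$ restricted to $A$ must be the trace of $\cmU$. Concretely: the set $B = \setof{n_k}{k \text{ even}}$ and its complement $A \setminus B = \setof{n_k}{k \text{ odd}}$ within $A$ cannot both meet every neighborhood of $*$, yet both are infinite subsequences of a sequence converging to $*$, so both converge to $*$ as well; thus both $B$ and $A\setminus B$ would have to lie in $\cmU$ — impossible since they are disjoint. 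More cleanly: a free ultrafilter $\cmU$, as a point of $\beta\natN$, is never the limit of a nontrivial sequence from $\natN$, because $\natN$ is C$^*$-embedded in $\beta\natN$ and no free ultrafilter is a $P$-point-limit of a countable discrete set — indeed the standard fact (\cite[\S3.6]{E}) is precisely that no point of $\beta\natN \setminus \natN$ is the limit of a sequence of distinct points of $\natN$. This contradicts the existence of the convergent sequence $\seqnn{n_k}{k}$ constructed above, completing the proof.

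\textbf{Main obstacle.} The one step requiring care is not topological but book-keeping: pinning down that $*$ must be an \emph{endpoint} of $L$ and has no immediate predecessor there, so that the initial segment $\omega$ is genuinely cofinal-from-below at $*$ and yields an honest convergent sequence. Once that order-theoretic picture is fixed, the contradiction is just the classical no-nontrivial-convergent-sequences property of $\beta\natN$, which we are entitled to cite.
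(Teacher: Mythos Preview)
Your proof has a genuine gap at the very first step: $\natN^*$ is \emph{not} compact, and it is \emph{not} a closed subspace of $\beta\natN$. Since $\natN$ is dense in $\beta\natN$, any set containing $\natN$ has closure equal to all of $\beta\natN$; in particular $\natN\cup\{*\}$ is not closed. Concretely, pick any $U\in\cmU$ with $\omega\setminus U$ infinite (such $U$ exists because a free ultrafilter is not the cofinite filter); then $\{\,U\cup\{*\}\,\}\cup\{\,\{n\}:n\in\omega\setminus U\,\}$ is an open cover of $\natN^*$ with no finite subcover. Consequently Proposition~\ref{prop-dfKSXCB.}(1) does not apply, and you cannot reduce to the case where $\natN^*$ is a LOTS. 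Every subsequent step of your argument (locating $*$ as an endpoint, the finiteness of the segments $(\,*,\max\,]^L$ and $[\,\min,*\,)^L$) leans on this nonexistent compactness, so the whole chain collapses.

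Your underlying intuition---that in a GO-structure one could manufacture an ordinary sequence from $\omega$ converging to $*$, contradicting the classical fact that no point of $\beta\natN\setminus\natN$ is a sequential limit of points of $\natN$---is sound, but to extract such a sequence you need to know that $*$ has countable character in the GO-topology. That is exactly what the paper proves directly: any countable GO-space $\quaple{X}{\tau^X}{L}{\lambda^L}$ satisfying (H1)--(H2) has $L$ order-embeddable in $[0,1]$ (because $X$ is countable and dense in $L$), hence $\chi(X)=\aleph_0$; since $\chi(\natN^*)>\aleph_0$ (\cite[3.6.17]{E}), $\natN^*$ cannot be a GO-space. Your argument, once repaired, would amount to a hands-on reformulation of this character computation, so the fix is to drop the compactness claim and argue countable character of a countable GO-space directly.
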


\begin{proof}
Recall that the character $\chi(X)$ of the infinite topological space $X$ is the supremum of cardinalities 
of minimal local neighborhood bases of all points in $X$.
\noindent

\begin{it}%
Fact 1.
Let $\quaple{X}{\tau^X}{L}{\lambda^L}$ be a GO-structure satisfying {\rm(H1)} and {\rm(H2)}. 
If $X$ is countable then $\chi(\pair{X}{\tau^X}) = \aleph_0$.
\end{it}
\proof 
Since $X$ is a countable subset of $L$ and $X$ is topologically dense in $L$, 
the chain $L$ is order-embeddable in the segment  $[0,1]$ of $\natR$. 
Since $\natQ \cap [0,1] \subseteq [0,1]$ we have $\chi([0,1]) = \aleph_0$. 
So, $\aleph_0 \leq \chi(X) \leq \chi(L) \leq \chi([0,1]) = \aleph_0$.  
\hfill$\square$%
\smallskip

\noindent
Now it suffices to remind a well-known fact that 
$\chi(\natN^*) > \aleph_0$ (see \cite[3.6.17]{E}).

Therefore, $\natN^*$ is not a GO-space.
\end{proof}

As a consequence of the above results~\ref{lemma-2.3} 
and~\ref{lemma-2.4}, we have: 

\begin{corollary}
\label{cor-hfdsk}
Let $X$ be a hereditarily GO-space. 
Then $X$ does not contain an infinite countable relatively discrete closed subset. 
\hfill$\Box$
\end{corollary}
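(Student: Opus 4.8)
The plan is to derive Corollary~\ref{cor-hfdsk} directly from Lemmas~\ref{lemma-2.3} and~\ref{lemma-2.4} together with the reductions already established. Suppose, for contradiction, that $X$ is a hereditarily GO-space containing an infinite countable relatively discrete closed subset $D$. Since $X$ is itself a GO-space, fix a GO-structure $\quaple{X}{\tau^X}{L}{\leq^L}$, which by Proposition~\ref{prop-fqljdlfk} may be taken to satisfy (H1) and (H2).

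First I would argue that we may reduce to the case where the ambient linear order is scattered. The key point is that a GO-space containing an infinite countable relatively discrete \emph{closed} subset cannot have a perfect subspace in a way that interferes; more precisely, the relevant ambient object is the Dedekind completion $L = L^c$, which is compact by Proposition~\ref{prop-klllkd.}(1). If $L$ were not order-scattered, it would contain a copy of $\natQ$, whose closure in $L$ is a perfect compact set, and this is where I would need to be a little careful — however, the cleaner route avoids this entirely: I would instead pass to a \emph{countable} closed subspace of $X$ containing $D$. Concretely, let $M_0$ be the closure in $X$ of $D$; since $D$ is closed, $M_0 = D$ is already closed, countable, and relatively discrete in $X$, hence discrete in itself. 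Rather than work with $D$ alone (which is just a discrete space), I would work with a countable closed LOTS hull: the closure of $D$ inside the completion $L^c$ is a countable compact subspace $M$ of $L^c$, and by Fact~2 of Section~\ref{section-elementary} the subspace topology on $M$ is the order topology of $\leq^L\restriction M$, so $M$ is a countable compact LOTS. A countable ordering is automatically order-scattered (it cannot contain $\natQ$... wait, $\natQ$ is countable), so here I genuinely must invoke that a countable compact LOTS is scattered — which follows because a compact LOTS containing $\natQ$ densely would contain a perfect set and hence be uncountable. Thus $M$ is an order-scattered LOTS.

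Next I would apply Lemma~\ref{lemma-2.3} to $M$: since $D\subseteq M$ is closed in $M$ (it was closed in $X\supseteq M$, intersect) and countable and relatively discrete in $M$, the lemma yields that $D$, with its discrete topology, is a continuous image of $M$. But $M$ is a closed subspace of $X$, and the property of being a hereditarily GO-space passes to closed subspaces — more simply, every continuous image of $M$ is a continuous image of $M$ viewed abstractly, and we only need that $D$ (discrete, countably infinite) has all of \emph{its} continuous images being GO-spaces, which fails. Indeed, the countably infinite discrete space maps continuously onto $\natN^*$ (collapse the complement of a fixed point $n_0$ according to the ultrafilter, exactly as in Fact~6 of Section~\ref{section-elementary}), and by Lemma~\ref{lemma-2.4} the space $\natN^*$ is not a GO-space. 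Chaining these: $X \twoheadrightarrow$ (closed retract onto) ... — here is the one genuine subtlety, since a closed subspace need not be a retract, so the composition $M \to D \to \natN^*$ need not extend to $X \to \natN^*$. The fix is to note that being a hereditarily GO-space is inherited by closed subspaces: if $F\subseteq X$ is closed and $g\colon F\to Z$ is continuous onto, extend $g$ to $X$ by sending $X\setminus F$ — this also need not work in general. So the honest argument is that Lemma~\ref{lemma-2.3} already delivers $D$ as a continuous image \emph{of $M$}, and we separately need $M$ itself to be hereditarily GO; but $M$ need not be a continuous image of $X$ either.

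Therefore the actual intended proof must be: the hypothesis ``hereditarily GO-space'' is used on $X$ directly via Lemma~\ref{lemma-2.3} applied to $X$ in place of $M$ — but $X$ need not be order-scattered. Resolving this tension is the main obstacle, and the resolution is precisely Proposition~\ref{prop-kfjdsluzreo} (stated but not yet proved in this excerpt), which reduces the whole Main Theorem to scattered $L$; within that reduction $X$ \emph{is} a subspace of a scattered LOTS, so Lemma~\ref{lemma-2.3} applies to $X$ itself (its closure $M$ in $L^c$ is a scattered, hence by Proposition~\ref{prop-1.3} $0$-dimensional, LOTS, $D$ is closed relatively discrete in $M$, so $D$ is a continuous image of $M$, and since $M$ is closed in $X$ we replace $X$ by $M$ using that closedness suffices for the one-point-collapse construction in Lemma~\ref{lemma-2.3} to already factor through $X\to M\to D$ via a single quotient map of $X$ — collapsing $X\setminus\bigcup\cmU_{D\setminus\{d_0\}}$ to a point works verbatim with $X$ in place of $M$ because $\bigcup\cmU_{D\setminus\{d_0\}}$ is clopen in $X$). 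Then $D\twoheadrightarrow\natN^*$ gives a continuous image of $X$ that is not a GO-space (Lemma~\ref{lemma-2.4}), contradicting that $X$ is hereditarily GO. Hence no such $D$ exists, proving the corollary. $\Box$
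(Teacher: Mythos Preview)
You have correctly put your finger on a real subtlety: Lemma~\ref{lemma-2.3} is stated only for an \emph{order-scattered LOTS} $M$, whereas a hereditarily GO-space $X$ is not known a priori to be of this form. The paper's one-line justification (``as a consequence of Lemmas~\ref{lemma-2.3} and~\ref{lemma-2.4}'') glosses over exactly this point.

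However, your proposed resolution is \emph{circular}. You invoke Proposition~\ref{prop-kfjdsluzreo} to conclude that $L$ is scattered, and only then apply Lemma~\ref{lemma-2.3}. But look at the proof of Proposition~\ref{prop-kfjdsluzreo}: in Case~1 it produces a countable closed relatively discrete set $D=\{x_\alpha:\alpha<\omega\}$ in $X_1$ and then \emph{appeals to Corollary~\ref{cor-hfdsk}} to reach a contradiction. So Proposition~\ref{prop-kfjdsluzreo} logically depends on Corollary~\ref{cor-hfdsk}, and you cannot route through it here. Your earlier attempt --- taking $M$ to be the closure of $D$ in $L^c$ and declaring it countable --- also fails as written: the closure of a countable set in a compact LOTS need not be countable (think of $\natQ\cap[0,1]$ inside $[0,1]$), so you cannot conclude that $M$ is scattered that way.

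The honest fix is to observe that the \emph{proof} of Lemma~\ref{lemma-2.3} uses scatteredness only to obtain $0$-dimensionality, and hence \emph{clopen} convex neighbourhoods $U_d$; the argument then runs verbatim in any $0$-dimensional GO-space (convex components of a clopen set in a GO-space are again clopen, so a clopen convex base exists). The single place the paper applies Corollary~\ref{cor-hfdsk} is inside Case~1 of Proposition~\ref{prop-kfjdsluzreo}, where $X_1$ has already been shown to be $0$-dimensional. In that setting one gets disjoint clopen convex $U_d$ with ${\rm acc}(\{U_d\})={\rm acc}(D)=\emptyset$, so $\bigcup_{d\neq d_0}U_d$ is clopen, the quotient $X_1\to D$ is continuous, and composing with $D\twoheadrightarrow\natN^*$ together with Lemma~\ref{lemma-2.4} gives the contradiction. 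Either add a $0$-dimensionality hypothesis to the corollary, or record that Lemma~\ref{lemma-2.3} extends to $0$-dimensional GO-spaces --- but do not invoke Proposition~\ref{prop-kfjdsluzreo}.
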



\subsection{A hereditarily GO-space comes from a scattered linear ordering}
\label{subsection scatteredness}

Let GO-structure $\quaple{X}{\tau^X}{L}{\leq^L}$ satisfies the conditions: 
\begin{it}%
\begin{itemize}
\item[{\rm(H1)}]
$X$ is topologically dense in $\pair{L}{\lambda^L}$.
\item[{\rm(H2)}]
$\pair{L}{\leq^L}$  is complete, meaning that 
$\pair{L}{\lambda^L}$ is a compact LOTS.
\end{itemize} 
\end{it}
Hence, by Proposition~{\rm\ref{prop-1.3}(1)(b)},
\begin{itemize}
\item[{\rm(H3)}]
For any $u<v$ in $L$: 
$[u , v]^L$ is order-scattered if and only if $[u , v]^L$ is  topologically-scattered.
\end{itemize}
Let $\pair{X}{\tau}$ be a hereditarily GO-space,  
meaning that $\quaple{X}{\tau^X}{L}{\leq^L}$ is a hereditarily GO-structure 
satisfying (H1)--(H3). 
For simplicity denote the space $\pair{X}{\tau^X}$ by $X$. 
We shall show that $\pair{L}{\leq^L}$ is order-scattered (Proposition~\ref{prop-kfjdsluzreo}), and thus, by Proposition~{\rm\ref{prop-1.3}(1)(b)}, $\pair{L}{\lambda^L}$ is topologically--scattered. 
Therefore $X$, as subset of $L$, is also topologically--scattered. 

Let \dfn{$\equiv^L$}{$\equiv^L$} be the equivalence relation on $L$ defined as follows.
For $x, y \in L$, we set $x \equiv^L y$ if 
$x \leq y$ and $[x , y]^L$ is an order-scattered subset of $L$, or
$y \leq x$ and $[y , x]^L$ is an order-scattered subset of $L$. 
Note that, by~(H3), 
in the definition of $\equiv^L$ we have: $[u , v]^L$ is order-scattered if and only if $[u , v]^L$ is topologically-scattered.

Now, each equivalence class is an order-scattered  
and convex subset of the LOTS~$\pair{L}{\leq}$ and each equivalence class is closed in $\pair{L}{\lambda}$. 
(\,$\equiv^L$ is standard (see the proof of Theorem~19.26, in~\cite{KKLP}\,)). 

We denote $L/{\equiv^L}$ by $L_1$ and  
by $\fnn{ \pi }{ L }{ L_1 }$ the projection map. 
So $\pi$ is increasing and thus $\pi$ induces a linear order $\leq_1$ on $L_1$. 
\begin{lemma}
\label{lemma-idsqp}
The linear ordering $\pair{L_1}{\leq_1}$ has the following properties.
\begin{itemize}
\item[{\rm(1)}]
$L_1$ is a complete linear ordering and the quotient topology on $L_1$ is the order topology $\lambda_1 \eqdf \lambda^{\leq{L_1}}$. 
\item[{\rm(2)}]
$L_1$ is order-dense, i.e. $L_1$ has no consecutive elements.
\item[{\rm(3)}]
$L_1$ is a compact and dense-in-itself space.
\item[{\rm(4)}]
$L_1$ is a connected space. 
\end{itemize}
%
\end{lemma}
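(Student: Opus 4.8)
The plan is to prove the four items in the order (2), then (1), then (3) and (4), since each later assertion rests on the earlier ones. The single observation used throughout is that every $\equiv^L$-class $C$ is closed in the compact LOTS $\pair{L}{\lambda^L}$, hence compact, and — being a closed subspace of a LOTS it carries the order topology of $\leq^L{\restriction}C$ (Fact~2 of Section~\ref{section-elementary}) — it is a compact LOTS, so by Proposition~\ref{prop-klllkd.}(1) it is order-complete; in particular each class has a least and a greatest element. Granting this, (2) is quick: if $C<_1 C'$ were consecutive classes, set $u=\max C$ and $v=\min C'$. By convexity of classes any $z\in(u,v)^L$ would lie in a class strictly between $C$ and $C'$, so $(u,v)^L=\emptyset$, whence $[u,v]^L=\{u,v\}$ is finite, hence order-scattered, forcing $u\equiv^L v$ and $C=C'$, a contradiction. (Since ``no consecutive elements'' is literally the same as ``order-dense'', this is exactly (2).)

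For (1), the linear order $\leq_1$ is already supplied by the setup ($\pi$ is increasing). To see $L_1$ is complete, given $\emptyset\neq A\subseteq L_1$ let $s=\sup^L\pi^{-1}[A]$ (available since $L$ is complete) and check that $\pi(s)=\sup_1 A$: it is an upper bound because $\pi$ is increasing, and if some $b<_1\pi(s)$ were an upper bound then, writing $u=\max\pi^{-1}(b)$, convexity of classes gives $u<s$, so some $x\in\pi^{-1}[A]$ satisfies $x>u$, and then $\pi(x)\in A$ with $\pi(x)>_1 b$, impossible; the infimum case is symmetric, and $A=\emptyset$ is covered by $\sup_1\emptyset=\pi(\min L)=\min L_1$ and $\inf_1\emptyset=\pi(\max L)=\max L_1$. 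For the topology, I would first show $\fnn{\pi}{\pair{L}{\lambda^L}}{\pair{L_1}{\lambda_1}}$ is continuous: the preimage of a ray $(-\infty,b)_1$ is the initial segment $\bigcup\setof{C}{C<_1 b}$, which can fail to be $\lambda^L$-open only by having a greatest element $m$, and then $[m]$ and $b$ would be consecutive in $L_1$, contradicting (2); rays $(b,+\infty)_1$ are symmetric. Thus $\lambda_1$ is contained in the quotient topology $\tau_q$, so the identity $\pair{L_1}{\tau_q}\to\pair{L_1}{\lambda_1}$ is a continuous bijection from a compact space (a continuous image of $L$) onto a Hausdorff space, hence a homeomorphism; so $\tau_q=\lambda_1$.

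Items (3) and (4) then follow formally. By (1), $L_1$ is a complete LOTS, hence compact by Proposition~\ref{prop-klllkd.}(1); by (2) it is order-dense, and since the proof of Proposition~\ref{prop-kfjdsluzreo} is carried out under the assumption that $L$ is not order-scattered, $L_1$ has more than one point (a one-point $L_1$ would mean $L$ is a single order-scattered class). An order-dense LOTS with $\abs{L_1}\geq 2$ has no isolated point, so $L_1$ is dense-in-itself, which is (3). For (4), completeness of $L_1$ gives $L_1=L_1{}^c$, so $L_1$ has no cuts; combined with the absence of consecutive elements from (2), Proposition~\ref{prop-klllkd.}(2) yields that $\pair{L_1}{\lambda_1}$ is connected.

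I expect the only genuine obstacle to be the identification of the quotient topology with the order topology in (1): the continuity of $\pi$ is the delicate step, and it is precisely where item (2) is indispensable — without ``no consecutive classes'' the relevant initial segments could possess maxima and so fail to be open. Once continuity is in hand, routing a continuous bijection from a compact space onto a Hausdorff space to a homeomorphism avoids any hands-on manipulation of neighbourhood bases. The degenerate possibility $\abs{L_1}=1$ coincides exactly with $L$ being order-scattered, a case the surrounding argument never needs to invoke this lemma for, so it requires no separate treatment.
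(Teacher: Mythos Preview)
Your proof is correct and follows essentially the same approach as the paper's: the key idea for (2) is that two consecutive $\equiv^L$-classes would merge into a single class, and (1), (3), (4) are then derived from completeness of $L$ together with (2) via Proposition~\ref{prop-klllkd.}. Your treatment of (1) is considerably more detailed than the paper's one-sentence assertion; in particular, your compact-to-Hausdorff argument identifying the quotient topology with $\lambda_1$ (which, as you correctly observe, requires (2) to secure continuity of $\pi$) fills in a step the paper leaves entirely to the reader.
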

\begin{proof}
(1) This part follows from the fact that $L$ is complete and $\pi$ is increasing and onto.

(2)--(3) 
Notice first that  $\pair{L_1}{\leq_1}$ is a complete chain, and thus 
$\pair{L_1}{\lambda_1}$ is compact.
Secondly, there are no consecutive $\equiv^L$-classes in $L$,  
this is so because the union of two consecutive $\equiv^L$-classes 
is an $\equiv^L$-class. 
Hence $L_1$ has no consecutive elements.

Therefore, $\pair{L_1}{\leq_1}$ is a dense chain  
and thus $L_1$ is dense-in-itself. 

(4) 
By Part~(2), $L_1$ has no consecutive elements.  
Also since $L_1$ is a complete chain, $L_1$ has no cuts.
So, by Proposition~\ref{prop-dfKSXCB.}, $L_1$ is a connected.
\end{proof}

Now we recall that $X \subseteq L$ and that for $x<y$ in $L$: 
\begin{itemize}
\item[{\rm(1)}]
$[x , y]^L$ is a scattered subspace of $\pair{L}{\lambda}$ if and only if $[x , y]^L$ is a scattered subchain of $\pair{L}{\leq}$, and  
\item[{\rm(2)}]
if $[x , y]^L$ is a scattered subspace of $L$ 
then $[x , y]^X \eqdf [x , y]^L \cap X$ is a scattered subspace of $X$ 
(but not vice-versa). 
\end{itemize}
Now the relation $\equiv^L$ induces an equivalence relation $\equiv^X$ on $X$, setting for \hbox{$x,y \in X$:} 
$$
x \equiv^X y \text{ \ if and only if \ } x \equiv^L y \, .
\dfn{}{$\equiv^X$}
$$
We denote by $X_1$ the space $X/{\equiv^X}$. 

\begin{lemma}
\label{lemma-hdfskqyzreio} 
The following hold for $\quaple{X_1}{\tau^{X_1}}{L_1}{\leq^{L_1}}$.
\begin{itemize}
\item[{\rm(1)}]
The continuous inclusion embedding $X \subseteq L$ induces 
a continuous inclusion embedding $X_1 \subseteq L_1$. 
\item[{\rm(2)}]
$X_1$ is a topologically dense subset of $L_1$ for the topology $\lambda_1$ 
(on $L_1$).
\item[{\rm(3)}] 
$X_1$ considered as subordering of $L_1$ has no consecutive points. 
\item[{\rm(4)}] 
$\quaple{X_1}{\tau_1}{L_1}{\leq_1}$ is a GO-structure.
\end{itemize}
\end{lemma}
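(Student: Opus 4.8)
The plan is to verify the four assertions of Lemma~\ref{lemma-hdfskqyzreio} essentially by transporting the structure already established for $L_1$ in Lemma~\ref{lemma-idsqp} down to the dense subset $X_1$, using the fact that the equivalence relation $\equiv^X$ is literally the restriction of $\equiv^L$ to $X$, so that $\pi{\restriction}X\colon X\to L_1$ factors through $X_1$ and gives an order-preserving injection $X_1\hookrightarrow L_1$.

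For Part~(1), I would argue that since $\equiv^X$ is the restriction of $\equiv^L$, two points of $X$ are in the same $\equiv^X$-class exactly when they lie in the same $\equiv^L$-class; hence the map $X_1\to L_1$ sending an $\equiv^X$-class to the $\equiv^L$-class containing it is well-defined and injective, and it is order-preserving because $\pi$ is. Continuity of the inclusion is automatic once we know (Part~(4)) that $\tau_1$ refines $\lambda_1{\restriction}X_1$; more directly, $X_1$ carries the quotient topology $\tau^{X_1}$ induced from $\tau^X$, and the composite $X\to X_1\to L_1$ equals $\pi{\restriction}X$, which is continuous from $\pair{X}{\tau^X}$ (since $\pi$ is continuous on $L$ and $\tau^X=\lambda^L{\restriction}X$), so the induced map $X_1\to L_1$ is continuous by the universal property of the quotient.

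For Part~(2), density of $X_1$ in $\pair{L_1}{\lambda_1}$: take a nonempty basic open convex set $(a,b)^{L_1}$ in $L_1$; its preimage $\pi^{-1}[(a,b)^{L_1}]$ is a nonempty open convex subset of $L$ (a union of $\equiv^L$-classes), so by (H1) it meets $X$, and the image of such a point in $X_1$ lands in $(a,b)^{L_1}$. Here I should be a little careful about endpoints — $\pi^{-1}$ of an open interval is open in $L$ because $\pi$ is continuous, and it is nonempty because $L_1$ is order-dense (Lemma~\ref{lemma-idsqp}(2)), so there really is an $\equiv^L$-class strictly between the two classes $a$ and $b$. For Part~(3), that $X_1$ has no consecutive points as a suborder of $L_1$: suppose $[x_1]<[y_1]$ were consecutive in $X_1$; by density of $L_1$ (again Lemma~\ref{lemma-idsqp}(2)) and density of $X_1$ in $L_1$ (Part~(2)) there is an element of $X_1$ strictly between them, a contradiction. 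Part~(4) follows from Proposition~\ref{prop-fqljdlfk} together with the observation that $\pair{L_1}{\leq_1}$ is complete (Lemma~\ref{lemma-idsqp}(1)) and $X_1$ is dense in it (Part~(2)): one checks that the quotient topology $\tau_1$ on $X_1$ coincides with $\lambda^{L}{\restriction}X$ pushed forward, hence equals $\lambda_1{\restriction}X_1$ after replacing $L_1$ by the closure of $X_1$ if necessary — but since $X_1$ is already dense in the complete order $L_1$, no replacement is needed and $\quaple{X_1}{\tau_1}{L_1}{\leq_1}$ is a genuine GO-structure.

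The step I expect to require the most care is Part~(4), specifically the identification of the quotient topology $\tau_1$ with the order/GO-topology $\lambda_1{\restriction}X_1$. The subtlety is that the quotient topology on $X_1=X/{\equiv^X}$ is defined intrinsically from $\pair{X}{\tau^X}$, whereas $\lambda_1{\restriction}X_1$ is read off from the order on $L_1$; one inclusion ($\lambda_1{\restriction}X_1\subseteq\tau_1$) is the easy ``order-open intervals pull back to open sets'' direction, while the reverse requires showing that a $\tau_1$-open set is a union of $X_1$-convex pieces, which in turn uses that $\equiv^L$-classes are convex and closed in $L$ (stated just before Lemma~\ref{lemma-idsqp}) so that saturated open convex subsets of $X$ descend correctly. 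I would handle this by observing that $\pi$ maps convex saturated open subsets of $\pair{X}{\tau^X}$ to convex open subsets of $X_1$ and conversely, and that $\pair{X}{\tau^X}$ has a base of convex open sets (criterion~(ii) in the introduction), so its image base is convex, giving exactly the GO-space criterion for $\quaple{X_1}{\tau_1}{L_1}{\leq_1}$.
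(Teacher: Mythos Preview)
Your treatment of Parts (1)--(3) is correct and is simply a more detailed version of the paper's own proof, which dispatches each of these in a single line (for (3) the paper uses exactly your argument: consecutive points of $X_1$ would be consecutive in $L_1$, contradicting Lemma~\ref{lemma-idsqp}(2)).

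For Part (4) there is a genuine gap in your approach. The paper writes only ``follows from the definitions'', which makes sense once one reads $\tau_1$ as the \emph{subspace} topology $\lambda_1{\restriction}X_1$: then $\quaple{X_1}{\tau_1}{L_1}{\leq_1}$ is a GO-structure tautologically, and the continuity of $X\to\pair{X_1}{\tau_1}$ needed downstream is just the restriction of the continuous map $\fnn{\pi}{L}{L_1}$. You instead take $\tau_1$ to be the quotient topology $\tau_q$ from $\pair{X}{\tau^X}$ and try to prove $\tau_q=\lambda_1{\restriction}X_1$. Your criterion-(ii) argument does show that $\pair{X_1}{\tau_q}$ is \emph{some} GO-space (convex base, refines $\lambda^{X_1}$), but that is strictly weaker than the statement of Part~(4), which demands the equality $\tau_q=\lambda_1{\restriction}X_1$ for this particular $L_1$. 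Your appeal to Proposition~\ref{prop-fqljdlfk} does not close the gap: that proposition says a suitable complete $L$ \emph{exists}, not that $L_1$ is it; density of $X_1$ in a complete order does not determine the order uniquely.

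In fact the equality $\tau_q=\lambda_1{\restriction}X_1$ can fail. Take $L=[0,1]+(\omega{+}1)+[0,1]$ (ordered concatenation, so $L$ is complete) and $X=L\setminus\{1_{\mathrm{left}},0_{\mathrm{right}}\}$, which is dense in $L$. The single nontrivial $\equiv^L$-class is $C=\{1_{\mathrm{left}}\}\cup(\omega{+}1)\cup\{0_{\mathrm{right}}\}$, and $C\cap X=(\omega{+}1)$ equals the open interval $(1_{\mathrm{left}},0_{\mathrm{right}})^L$ intersected with $X$, hence is $\tau^X$-open; thus the singleton $\{[C]\}$ is $\tau_q$-open in $X_1$. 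But $L_1\cong[0,2]$ is dense-in-itself, so $\{[C]\}\notin\lambda_1{\restriction}X_1$. (This $X$ is not hereditarily GO, of course, but nothing in your argument for Part~(4) invoked that hypothesis.) The clean fix is to set $\tau_1:=\lambda_1{\restriction}X_1$ as the paper implicitly does; Part~(4) is then immediate, and since $\lambda_1{\restriction}X_1\subseteq\tau_q$ the map $X\to\pair{X_1}{\tau_1}$ is still continuous, which is all that is required for $X_1$ to inherit the hereditarily-GO property used in Proposition~\ref{prop-kfjdsluzreo}.
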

\begin{proof}
(1)
Let $\fnn{ \pi }{ L }{ L_1 }$ be the projection map. 
Then $\pi[X] \eqdf X_1 \subseteq L_1$ and the embedding $X_1 \subseteq L_1$ is continuous. 

(2)
Since $X$ is a topologically dense subset of $L$,
$X_1$ is topologically dense in $L_1$. 

(3) 
Since $X_1$ is topologically dense in $L_1$, 
if $a_1<b_1$ are consecutive elements in $X_1$ 
then $a_1<b_1$ are also consecutive in $L_1$.
This contradicts Lemma~\ref{lemma-idsqp}(2).

(4) follows from the definitions.
\end{proof}

We have seen that $\quaple{X_1}{\tau_1}{L_1}{\leq_1}$ is a GO-structure
with the properties (H1), (H2), and the properties
(1)--(4) of Lemma~\ref{lemma-idsqp} and (1)--(3) of Lemma~\ref{lemma-hdfskqyzreio}.

\begin{proposition}
\label{prop-kfjdsluzreo} 
Let $\quaple{X}{\tau}{L}{\leq}$ be a hereditarily GO-space satisfying {\rm(H1)} and {\rm(H2)}. 
Then $\pair{L}{\leq}$ is a scattered chain. 
\end{proposition}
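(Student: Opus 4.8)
The strategy is to assume for contradiction that $\pair{L}{\leq}$ is not order-scattered, equivalently that the quotient $L_1 = L/{\equiv^L}$ is nontrivial (i.e.\ has more than one point). By Lemma~\ref{lemma-idsqp}, $L_1$ is then a compact, connected, dense-in-itself LOTS, and by Lemma~\ref{lemma-hdfskqyzreio} the subspace $X_1 \subseteq L_1$ (the image of $X$ under the projection) sits densely in $L_1$ with no consecutive points, so that $\quaple{X_1}{\tau_1}{L_1}{\leq_1}$ is a GO-structure satisfying (H1) and (H2). Since $X_1$ is a continuous image of $X$, it is itself a hereditarily GO-space, so everything proved about $X$ applies to $X_1$; in particular, by Lemma~\ref{lemma-2.1}, $X_1$ satisfies c.c.c.\ and neither $\omega_1$ nor $\omega_1^*$ order-embeds in $X_1$, and by Corollary~\ref{cor-hfdsk}, $X_1$ contains no infinite countable relatively discrete closed subset.

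The heart of the argument is to manufacture, inside $X_1$ (or some continuous image of it), a forbidden configuration. The natural candidate is a countable closed discrete set produced from a cut: pick a point $c \in L_1$ that is a cut of $X_1$, i.e.\ $c \notin X_1$ (such a $c$ exists since $L_1$ is connected, hence has no isolated points, while $X_1$ is a \emph{proper} dense subset — if $X_1 = L_1$ then $L_1$ would be a connected GO-space in which, by c.c.c.\ and density-in-itself one derives a contradiction, so one must first dispose of that case, perhaps via the interval-collapsing trick used in Case~2 of the elementary proof). Because $X_1$ satisfies c.c.c.\ and $\omega_1,\omega_1^*$ do not embed, the cofinality of $(-\infty, c) \cap X_1$ and the coinitiality of $(c, +\infty) \cap X_1$ are both countable; choosing a strictly monotone $\omega$-sequence $\seqnn{x_n}{n \in \omega}$ in $X_1$ converging (in $L_1$) to $c$ gives a subset $D = \setof{x_n}{n \in \omega}$ of $X_1$ that is relatively discrete (each $x_n$ can be isolated by a convex clopen interval missing $c$) and closed in $X_1$ (its only possible accumulation point in $L_1$ is $c$, which is not in $X_1$). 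This contradicts Corollary~\ref{cor-hfdsk}.

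The step I expect to be the main obstacle is handling the alternative case where $\Gamma \eqdf L_1 \setminus X_1$ fails to be dense in $L_1$ — i.e.\ where some nonempty open interval $(u,v)^{L_1}$ lies entirely inside $X_1$. There one cannot produce a cut-sequence; instead one should imitate Case~2 of the elementary proof for LOTS: the closed interval $[u,v]$ in $X_1$ is then (using Lemma~\ref{lemma-hdfskqyzreio} and the fact that $[u,v]^{L_1} \subseteq X_1$) a connected, compact, order-dense LOTS, hence an arc-like space, and collapsing its endpoints $u \sim v$ yields a continuous image which is connected but becomes a ``circle-like'' space — a compact connected space in which every point has connected complement — and such a space is not a GO-space (in a LOTS, removing a non-endpoint disconnects it). This contradicts the hereditarily-GO hypothesis. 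One must check that $[u,v]$ really is a continuous image of $X_1$ (it is, being a retract-by-constant-extension, or via the quotient that crushes the complement of $(u,v)$ to the two endpoints) and that the endpoint-identification quotient is Hausdorff.

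\textbf{Summary of steps, in order.} (i) Reduce to showing $L_1$ is a single point; assume not. (ii) Record that $X_1$, being a continuous image of $X$, inherits c.c.c., non-embeddability of $\omega_1,\omega_1^*$, and absence of countable closed relatively discrete subsets. (iii) Split into the case $\Gamma = L_1 \setminus X_1$ dense in $L_1$ versus not. (iv) In the dense case, pick a cut $c$, extract a monotone $\omega$-sequence in $X_1$ converging to $c$, observe it is closed and relatively discrete in $X_1$, contradicting Corollary~\ref{cor-hfdsk}. (v) In the non-dense case, find $(u,v)^{L_1} \subseteq X_1$, pass to the connected compact order-dense LOTS $[u,v]$ (a continuous image of $X_1$, hence of $X$), collapse $u \sim v$ to get a connected space with all point-complements connected, and conclude it is not a GO-space — contradiction. (vi) Therefore $L_1$ is trivial, $L$ is order-scattered, and the proposition follows.
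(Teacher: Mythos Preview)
Your proposal is correct and follows essentially the same route as the paper's proof: reduce to $|X_1|=1$, split according to whether the set of cuts $\Gamma_1 = L_1 \setminus (X_1 \cup \{\min,\max\})$ is dense in $L_1$, in the dense case extract a monotone $\omega$-sequence toward a cut to get a countable closed relatively discrete subset (contradicting Corollary~\ref{cor-hfdsk}), and in the non-dense case collapse the ends of a connected interval $[u,v]^{X_1}$ to obtain a circle-like continuous image that cannot be GO. The only minor slip is that you invoke Lemma~\ref{lemma-2.1} for $X_1$ in step~(ii) before verifying $0$-dimensionality, whereas the paper establishes $0$-dimensionality of $X_1$ from the density of $\Gamma_1$ first; since you only actually use c.c.c.\ in the dense case, this is harmless.
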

\begin{proof} 
Now, with the above notations of \S\ref{subsection scatteredness}, we consider the GO-space
\newline 
$\quaple{X_1}{\tau^{X_1}}{L_1}{\leq^{L_1}}$ instead of the GO-space $\quaple{X}{\tau^{X}}{L}{\leq^{L}}$.
Let  
$$
\Gamma_1 = L_1 \setminus 
\bigl( X_1 \cup \{ \min(L_1),  \max(L_1) \} \bigr) \, . 
\dfn{}{$\Gamma_1$: set of cuts of $X_1$ in $L_1$}
$$
That is, ``$\Gamma_1$ is the set of cuts of the chain $\pair{ X_1 }{\leq^{X_1}}$ considered as linear sub-ordering order of $\pair{ L_1 }{\leq^{L_1}}$''. 
An obvious characterization of the elements of $\Gamma_1$ is stated in the following fact. 
\smallskip

\noindent
\begin{it}%
Fact 1.
We have that $\gamma \in \Gamma_1$ if and only if $\gamma$ defines two nonempty clopen sets, namely $(-\infty,\gamma)^{X_1}$ and $(\gamma, +\infty)^{X_1}$, for the induced topology $\tau^{X_1}$. 

Therefore 
$\pair{X_1}{\tau^{X_1}}$ is a connected space if and only if $\Gamma_1$ is empty and $X_1$ has no consecutive elements. 
\hfill$\square$%
\end{it}
\smallskip

To prove Proposition~\ref{prop-kfjdsluzreo}, we distinguish two cases, 
and in fact we prove that $\abs{X_1} = 1$. 
This implies that $\pair{L}{\leq^L}$ is order-scattered. 
\smallskip

\noindent
\begin{it}%
Case 1. 
$\Gamma_1$ has no consecutive elements 
and $\Gamma_1$ is topologically dense in $L_1$ 
for the order topology $\lambda_1$. 
\end{it}
\smallskip
\newline
So the set $\Gamma_1$ is a dense linear order and 
every nonempty open convex subset of $L_1$ contains a cut. 
Since $X_1$ is topologically dense in $L_1$, any nonempty open convex subset of $L_1$ contains a cut and thus, in that case, 
\begin{itemize}
\item 
$\pair{X_1}{\tau_1}$ is 0-dimensional. 
\end{itemize}
Let $c \in \Gamma_1$. 
Since $\Gamma_1$ has no first element,  
let $\seqn{ c_\alpha }{ \alpha < \lambda }$ be a cofinal strictly increasing 
sequence in $(-\infty,c)^{X_1}$ 
where $\lambda$ is an infinite regular cardinal, 
that is, for every $x \in (-\infty,c)^{X_1}$ there is $\alpha$ 
such that $x \leq c_\alpha$. 
\smallskip

\noindent
\begin{it}%
Fact 2.
$\lambda = \omega$.
\end{it}
\proof 
If not then $\omega_1$ is order-embeddable in $\Gamma_1$.
Choose $x_\alpha \in (c_\alpha,c_{\alpha+1}) \cap X_1$ for any $\alpha$, 
then $\seqn{ x_\alpha }{ \alpha < \lambda }$ is 
a sequence in $X_1$, order-isomorphic to $\omega_1$.
Since $\pair{X_1}{\tau_1}$ is 0-dimensional 
(but not necessarily an interval space), 
this contradicts Lemma~\ref{lemma-2.1}(2). 
We have proved that $\lambda = \omega$. 
\hfill$\square$
\smallskip

Keeping the same notations, we have 
($*$):~the set $D \eqdf \setof{ x_\alpha }{ \alpha \in \omega }$ is countable and relatively discrete.
Also since $c$ is a cut, 
($**$):~$D$ is closed in $X_1$. ($*$) together with ($**$) contradicts Corollary \ref{cor-hfdsk}.
Therefore, Case~1 does not occur. 
\smallskip
\newline
\begin{it}
Case 2. 
Not Case 1.
\end{it}
\smallskip
\newline
This implies that 
either $\Gamma_1 \eqdf L_1 \setminus 
\bigl( X_1 \cup \{ \min(L_1),  \max(L_1) \} \bigr)$ 
has two consecutive elements in $L_1$ 
or $\Gamma_1$ is not topologically dense in $L_1$.
Anycase, 
there is an infinite open interval $(u,v)^{L_1}$ of $L_1$ 
(with $u<v$ in $L_1$)
that does not contain a member of~$\Gamma_1$. 
Recall that we have the following properties.
\begin{itemize}
\item[{\rm(P1)}]
The elements of $L_1$ are exactly the $\equiv^L$-classes of $L$,  
and  
\item[{\rm(P2)}]
$\pair{L_1}{\leq_1}$ is a dense linear order. 
\end{itemize}
Since $(u,v)^{L_1}$ is infinite, we may assume that $u, v \not\in \Gamma_1$.
So, we have the additional properties.
\begin{itemize}
\item[{\rm(P3)}]
$[u,v]^{L_1} \cap \Gamma_1 = \emptyset$, and thus 
$[u,v]^{L_1} = [u,v]^{X_1}$.
\item[{\rm(P4)}]
$[u,v]^{L_1}$ is infinite.
\item[{\rm(P5)}]
$X_1$, and thus $[u,v]^{X_1}$, has no consecutive elements (Lemma~\ref{lemma-hdfskqyzreio}(3)). 
\item[{\rm(P6)}]
Hence, by (P3),  (P5) and Proposition~\ref{prop-klllkd.}(2), 
$[u,v]^{X_1}$ is a connected space.
%
\end{itemize}
Next we prove that $\abs{X_1}= 1$. 
By contradiction, assume that $\abs{X_1}> 1$. 
We consider the equivalence relation \dfn{$\approxeq$}{$\approxeq$} on $X_1$
which identifies all elements of $(-\infty,u] \cup [v,+\infty)X$.
So $X_1/{\approxeq}$, denoted by $X_2$, is a continuous image of~$X_1$. 
Also  $X_2$ is a connected space and, by~(P4), $X_2$  is an infinite continuous image of $X$.
\smallskip

\noindent
\begin{it}%
Fact 3. 
Let $\quaple{Y}{\tau^Y}{M}{\leq^M}$ be a GO-structure such that 
$\pair{Y}{\tau^Y}$ is connected. 

Then, for every $y \in Y$: if $y$ is not the minimum nor the maximum of $Y$  
(if they exist) 
then the subspace $Y \setminus \{ y \}$ is not a connected space.
\end{it}
\proof 
By Proposition~\ref{prop-dfKSXCB.}(2), $\tau^Y = \lambda^Y$.
Let $y \in Y$ be such that $y$ is not the minimum nor the maximum of $Y$. 
Set $U = (\infty,y)^M$ and $V = (y,\infty)^M$.
By the definition, $U$ and $V$ are open subsets of $M$. 
Hence $U \cap Y$ and $V \cap Y$ define a partition of $Y$ into two nonempty open sets of $Y$. 
We have proved Fact~3. 
\hfill$\square$%
\smallskip

We claim that 
\smallskip

\noindent
\begin{it}%
Fact 4.
For every $x \in X_2$, the subspace $X_2 \setminus \{ x \}$ is connected. 
\end{it}
\proof
First recall that $X_2$ is a connected space. 
Also we can define $X_2$ as follows: 
$X_2$ is the quotient of $[u,v]^{X_1}$ identifying $u$ and $v$. 
Fact~4 follows from the claim that
$[u,v]^{X_1}$ is a connected interval subspace of $X_1$. 
%
\hfill$\square$
\smallskip

Now, from~(P6) it follows that: $X_2$ is connected,  
and by Fact~4: for any $x \in X_2$ the space $X_2 \setminus \{ x \}$ is connected. 
Hence, by Fact~3, the space $X_2$ is not a GO-space. 
So $X_1$ and thus $X$ is not a GO-space.
 
In other words, by (P1) and (P2), if $X$ (or equivalently $L$) has more than one $\equiv^{L}$-class, then $X_2$ is a continuous image $X$ and $X_2$ is not a GO-space. This contradicts the fact that $X$ is a hereditarily GO-space. 
We have proved that $\abs{X_1}= 1$. 

Further, $\abs{X_1}= 1$ 
means that $X_1$ consists of exactly one $\equiv^X$-class, 
or, equivalently, there is the unique $\equiv^L$-class in $L$. 
Since for $x<y$ in $X$: $x \equiv^L y$ if and only if  $[x,y]^L$ is a scattered chain, the chain $L$ is scattered.
\end{proof}


\subsection{End of the proof of Main Theorem}
\label{subsection proof of thm} 

Let $\quaple{X}{\tau}{L}{\leq}$ be a hereditarily GO-space.
We prove first that $X$ is countable. 
By Proposition~\ref{prop-kfjdsluzreo}, $\pair{L}{\leq}$ is a scattered chain.
Since $\pair{L}{\lambda}$ is compact and topologically-scattered,
by Lemma~\ref{lemma-2.1}, $\pair{X}{\tau}$ satisfies c.c.c. property, 
so $X$ has only countably many isolated points. 
Denote by $\rfs{Iso}(Y)$ the set of isolated points of $Y$.
Since $X$ is topologically dense in $L$, we have $\rfs{Iso}(L) = \rfs{Iso}(X)$ 
and thus $\rfs{Iso}(L) = \aleph_0$.
Therefore, by Proposition~\ref{prop-1.3}(3), $L$, and thus $X$ is also countable.

Next, by Lemma~\ref{lemma-2.3}(3), 
the space $X$ does not contain a countable relatively discrete set. 
Since $X$ is countable, $X$ is closed under supremum and infimum in $L$, 
and thus $X$, as a linear order, is complete. 
We have seen that $L$ and $X$ are compact and countable. 
Finally, since $X$ is topologically-scattered, by Lemma~\ref{lemma-2.2}, $X$ is homeomorphic to the LOTS $\alpha+1$ where $\alpha$ is a countable ordinal.

We have proved Main Theorem~\ref{main-thm}.


\begin{thebibliography}{99}
\label{references}

\bibitem{Okunev} A.~V.~Arkhangel'skii and O.~V.~Okunev,
\newblock Characterization of properties of spaces by properties of their continuous images,
\newblock \textit{Moscow Univ. Bull.} \textbf{40(5)} {1985}, 32--35.


\bibitem{BL} H.~R.~Bennett, D.~J.~Lutzer,
\newblock \textit{Topology and order structures},
\newblock Mathematical Centre Tracts, (142) {1981} and (169) {1983}.
 
\bibitem{BT} M.~Bekkali and S.~Todor\v cevi\'c,
\newblock Algebras that are hereditarily interval,
\newblock \textit{Algebra Universalis} \textbf{73(1)} {2015}, 87--95.

\bibitem{E} R.~Engelking, 
\newblock \textit{General Topology}, 
\newblock Heldermann Verlag, Berlin 1989.

\bibitem{FL1} W.~Fleissner and R.~Levy, 
\newblock \textit Ordered spaces all of whose continuous images are normal, 
\newblock \textit{Proc. Amer. Math. Soc.}, \textbf{105(1)} {1989}, 231--235. 

\bibitem{FL2} W.~Fleissner and R.~Levy, 
\newblock Stone-\v{C}ech remainders which make continuous images normal,
\newblock \textit{Proc. Amer. Math. Soc.}, \textbf{106(3)} {1989}, 839--842. 

\bibitem{HNV} K.~P.~Hart, J.~Nagata and J.~E.~Vaughan,
\newblock \textit{Encyclopedia of General Topology}, 
\newblock Elsevier Science Publishers, B.V., Amsterdam 2004. 

\bibitem{K} S.~Koppelberg, 
\newblock \textit{Handbook on Boolean Algebras, Vol.\,1}, 
\newblock Edited by J. D. Monk and R. Bonnet. North-Holland Publishing Co., Amsterdam 1989.

\bibitem{KKLP} J.~K\c{a}kol, W.~Kubi\'s, M.~L{\'o}pez-Pellicer,
\newblock \textit{Descriptive Topology in Selected Topics of Functional Analysis}, 
\newblock Developments in Mathematics, 24. Springer, New York, 2011. 

\bibitem{L} D.~J.~Lutzer, 
\newblock \textit{On generalized ordered spaces},
\newblock Dissertationes Mathematicae 89 {1971}, 32 pp.
 
\bibitem{Ro} J.~G.~Rosenstein,
\newblock \textit{Linear orderings}, 
Pure and Applied Mathematics, 98. Academic Press, Inc., {1982}\,. 

\bibitem{Ru} M.~E.~Rudin,
\newblock  Nikiel's conjecture, 
\newblock \textit{Topology Appl.} \textbf{116(3)} {2001}, 305--331.

\bibitem{S} Z.~Semadeni,
\newblock \textit{Banach spaces of continuous functions, Vol. 1},
\newblock Monografie Matematyczne, Tome 55, 
PWN--Polish Scientific Publishers, Warsaw, 1971.

\bibitem{Tkachuk} V.~V.~Tkachuk and R.~G.~Wilson,
\newblock  Reflections in small continuous images of ordered spaces, 
\newblock \textit{European J. Math.} \textbf{2} {2016}, 508--517.

\end{thebibliography}
\end{document}